\newcommand{\stkout}[1]{\ifmmode\text{\sout{\ensuremath{#1}}}\else\sout{#1}\fi}
\definecolor{red-}{rgb}{1.0,0.0,0.0}
\definecolor{grey}{rgb}{0.6, 0.6, 0.6}
\definecolor{brown}{rgb}{0.5,0.2,0.0}
\definecolor{brown-}{rgb}{0.0,0.1,1.0}
\definecolor{green-}{rgb}{0.0, 0.6, 0.0}
\definecolor{gold}{rgb}{0.8,0.7,0.0}
\definecolor{black}{rgb}{0.0,0.0,0.0}
\definecolor{DarkGreen}{rgb}{0.0,0.3,0.2}
\definecolor{LightGreen}{rgb}{0.8,1.0, 0.8}
\definecolor{yellow}{rgb}{0.9,0.9,0.0}
\definecolor{blue-}{rgb}{0.0,0.1,1.0}
\newtheorem{theorem}{Theorem}[section]
\newtheorem{remark}[theorem]{Remark}
\newtheorem{assumption}[theorem]{Assumption}
\newtheorem{lemma}[theorem]{Lemma}
\newtheorem{proposition}[theorem]{Proposition}
\newtheorem{corollary}[theorem]{Corollary}
\newtheorem{df}[theorem]{Definition}
\def \R{\mathbb{R}}
\def\d{\mathrm{d}}
\definecolor{red}{rgb}{1.0,0.0,0.0}
\definecolor{blu}{rgb}{0.0,0.0,1.0}
\definecolor{gre}{rgb}{0.03,0.50,0.03}
\title[Epidemic Control via Vaccination]{Optimal Vaccination in a  SIRS Epidemic Model}
\author[Federico]{Salvatore Federico}
\author[Ferrari]{Giorgio Ferrari}
\author[Torrente]{Maria-Laura Torrente}
\address{S.~Federico: Dipartimento di Economia, Universit\`a di Genova, Via F.\ Vivaldi 5, 16126, Genova, Italy}
\email{\href{mailto:salvatore.federico@unige.it}{salvatore.federico@unige.it}}
\address{G.~Ferrari: Center for Mathematical Economics (IMW), Bielefeld University, Universit\"atsstrasse 25, 33615, Bielefeld, Germany}
\email{\href{mailto:giorgio.ferrari@uni-bielefeld.de}{giorgio.ferrari@uni-bielefeld.de}}
\address{M.L.~Torrente: Dipartimento di Economia, Universit\`a di Genova, Via F.\ Vivaldi 5, 16126 Genova, Italy}
\email{\href{mailto:marialaura.torrente@economia.unige.it}{marialaura.torrente@economia.unige.it}}
\date{\today}
\numberwithin{equation}{section}
\begin{document}

\begin{abstract} 
We propose and solve an optimal vaccination problem within a deterministic compartmental model of SIRS type: the immunized population can become susceptible again, e.g.\ because of a not complete immunization power of the vaccine. A social planner thus aims at reducing the number of susceptible individuals via a vaccination campaign, while minimizing the social and economic costs related to the infectious disease. As a theoretical contribution, we provide a technical non-smooth verification theorem, guaranteeing that a semiconcave viscosity solution to the Hamilton-Jacobi-Bellman equation identifies with the minimal cost function, provided that the closed-loop equation admits a solution. Conditions under which the closed-loop equation is well-posed are then derived by borrowing results from the theory of \emph{Regular Lagrangian Flows}. From the applied point of view, we provide a numerical implementation of the model in a case study with quadratic instantaneous costs. Amongst other conclusions, we observe that in the long-run the optimal vaccination policy is able to keep the percentage of infected to zero, at least when the natural reproduction number and the reinfection rate are small. 
\end{abstract}

\maketitle

\smallskip

{\textbf{Keywords}}: SIRS model; optimal control; viscosity solution; non-smooth verification theorem; epidemic; optimal vaccination.

\smallskip

{\textbf{MSC2010 subject classification}}: 93C15, 49K15, 49L25, 92D30.
  
\smallskip

{\textbf{JEL classification}}: C61, I12, I18.


\section{Introduction}
\label{introduction}

During the recent Covid-19 pandemic, policymakers have been dealing in a first period with the implementation of severe lockdowns, while, in a second phase, with the massive vaccination policy of the susceptible population. Simultaneously, the scientific community experienced a renewed interest for epidemic mathematical models where an agent -- typically a social planner -- aims at taming the spread of a disease by designing lockdown policies and/or vaccination strategies that minimize social and economic costs. The starting point of the majority of this literature is the classical SIR model (cf.\ \cite{KermackMcKendrick}), where, at each point in time, each person in a population of $N$ individuals is either Susceptible, Infectious, or Recovered from a disease and can dynamically change her/his status according to a deterministic law of motions. 

Modeling a social planner's actions usually results into the introduction of control variables in the dynamics of the considered compartmental epidemic model. For example, the transmission rate becomes a control variable in the generalized SIR models considered in \cite{KruseStrack} and \cite{Micloetal}, and a controlled state-variable in the stochastic version of \cite{FedericoFerrari}; time-dependent lockdown policies directly affect the dynamics in the controlled SIRD (Susceptible-Infected-Recovered-Dead) model studied in \cite{Alvarez} and \cite{Calvia-etal}, as well as in the multi-group SIR model of \cite{Acemoglu}. With respect to the literature on optimal confinement policies, that on optimal vaccination has a longer history. The vaccination strategy that minimizes the social costs arising from the spread of a disease evolving according to the SIR model is studied in \cite{Hethcote-Waltman}. This contribution was communicated for publication on ``Mathematical Biosciences'' by Richard Bellman and it perhaps represents one of the earliest applications of the dynamic programming techniques in epidemiology. A related optimization problem is considered in \cite{Barrett-Hoel}, but in the context of a SIS (Susceptible-Infected-Susceptible) model. The comparison between compulsory vaccination and market allocation is studied in \cite{Brito-etal}. The seminal work \cite{Geoffard-Philipson} starts observing that ``of the roughly 40 vaccines on the market, only the smallpox vaccine has been successful in eradication'' by 1997. The study in \cite{Geoffard-Philipson} thus aims at understanding which forces prevent the eradication through vaccines of a disease, and the authors conclude that vaccinations yield a drop of infected individuals, which in turn leads to a drop in the demand for vaccines, which finally implies the return of the infectious disease. In the recent \cite{Glover-etal} it is studied how to best allocate a given time-varying supply of vaccines across individuals of different ages, and discuss the possible sub-optimality (in terms of  economic recovery) of the actual deployment path that prioritized older retired individuals to younger working people. A similar problem of optimal allocation of limited vaccines is considered in \cite{RaoBrandeau} as well.

In this paper we propose and study the problem of optimal vaccination against an infectious disease that evolves according to a generalized SIRS model. Differently to the classical SIR setting, where the compartment of recovered individuals is an absorbing state, in the SIRS model immunized people can  become susceptible again at a given rate $\eta>0$ (due, e.g., to a not complete immunization power of vaccines). We consider a social planner that aims at determining a vaccination strategy which reduces the number of susceptible individuals, while minimizing total social and economic costs. These are due, e.g., to the arrangement of vaccination hubs and to the employment of medical staff. 

It is well known that determining an explicit solution to control problems arising in epidemiology is extremely hard, if possible at all. As a matter of fact, those dynamic optimization problems are typically multidimensional and the dynamics of the controlled state-variables are nonlinear. Although we are not able to determine the expression of the optimal vaccination policy in closed form, in this paper we provide a thorough analysis of the optimal vaccination problem, which actually leads to the identification of easily verifiable sufficient conditions for the optimality of a candidate solution and to a numerical implementation in a relevant case study. This is accomplished as we explain in the following. First of all, we show that the assumed semiconcavity property of the instantaneous cost function is inherited by the problem's minimal cost function $V$, so that the latter is shown to be a semiconcave viscosity solution to the related Hamilton-Jacobi-Bellman (HJB) equation. A delicate technical analysis then allows to prove a verification theorem for non-smooth (i.e.\ viscosity) solutions to the HJB equation (see Theorem \ref{prop:convexanal} below). Even if the proof of this result is inspired by that of Theorem 3.9 in \cite{YongZhou1999}, the arguments therein needed to be thoroughly adapted and expanded to the present setting by properly exploiting the semiconcavity of $V$, and thus the properties of its supergradient (cf.\ Proposition \ref{prop:convexanal} below). The verification theorem assumes that a solution to the so-called closed-loop equation exists. Sufficient conditions for the well-posedness of the latter are then determined in Proposition \ref{prop:closedloop}, where results from the theory of Regular Lagrangian Flows (cf.\ \cite{Ambrosio1, Ambrosio2}) are employed. To the best of our knowledge, this is the first paper that combines the theory of Regular Lagrangian Flows with the study an optimal control problem. As a corollary of the verification theorem and of the existence of a solution to the closed-loop equation, we then obtain that the minimal cost function $V$ is indeed the unique semiconcave viscosity solution to the HJB equation.

The latter uniqueness results paves the way for a numerical study of the optimal vaccination problem. Indeed, as a complement to our theoretical analysis, we also provide a numerical implementation which is based on a recursion of the HJB equation, initialized by the null function. For the numerical exercise we assume a specification of the model with quadratic instantaneous costs, under which the conditions of the verification theorem are satisfied. Fixing the values of the model's parameters, we study the evolution of the optimal vaccination policy, the evolution of the instantaneous reproduction number, as well as the dynamics of the (optimally controlled) percentages of susceptible, infected, and immunized (recovered) individuals. A numerical result suggests that, following the optimal vaccination plan, the social planner is able in the long-run to keep the number of infected individuals equal to zero. In particular, this happens when the reinfection parameter or the natural reproduction number are sufficiently small. However, since the model prescribes reinfection at rate $\eta>0$, the disease cannot be eradicated in the strict sense, and the vaccination campaign cannot be terminated if the aim is to maintain zero infections. Hence, if the social planner wishes to stop vaccinating the population, other forms of control must be adapted, such as isolation. We also observe that the social planner is allowed to relax the vaccination policy only after a first time period where the maximal possible vaccination effort is made. As expected, the length of such an initial phase increases when the number of initial infected people increases.

The rest of this paper is organized as follows. Section \ref{sec:setting} presents the model and the problem's formulation. Section \ref{sec:mainresult} provides the theoretical analysis and the solution to the optimal vaccination problem, while Section \ref{sec:numerics} discusses the numerical results. Finally, conclusions are presented in Section \ref{sec:concl}.


\section{Problem Formulation}
\label{sec:setting}

\subsection{The Generalized SIRS Model}
\label{sec:SIR}

We model the spread of the infection by relying on a variation of the classical SIR model that dates back to the work by Kermack and McKendrick \cite{KermackMcKendrick}. The society has population $N$ and it consists of three different groups. The first group is formed by those people who are healthy, but susceptible to the disease; the second group contains those who are infected, while the last cohort consists of those who are immunized, that is recovered, dead or vaccinated. However, differently to the classical SIR model, we assume that, once immunized, an individual can become susceptible again, so that we face a SIRS epidemic model. We denote by $S(t)$ the percentage of individuals who are susceptible at time $t\geq0$, by $I(t)$ the percentage of infected, and by $R(t)$ the fraction of immunized (which, in the sequel, we will also call recovered). Clearly, $S(t) + I(t) + R(t)=1$ for all $t\geq0$.

The fraction of infected people grows at a rate which is proportional to the fraction of society that it is still susceptible to the disease. In particular, letting $\beta$ be the instantaneous transmission rate of the disease, during an infinitesimal interval of time $\d t$, each infected individual generates $\beta S(t)$ new infected individuals. It thus follows that the percentage of healthy individuals that get infected within $\d t$ units of time is $\beta I(t)S(t)$. During an infinitesimal interval of time $\d t$, the fraction of infected is reduced by $\gamma I(t)$, since infected recover from the disease at a rate $\gamma>0$. 

During an infinitesimal interval of time $\d t$, the fraction of susceptible people naturally decreases because, with rate $\beta$, susceptible become infected, as previously discussed. However, the decrease of the mass of susceptible people can be also controlled by a central planner via a vaccination policy that happens with rate $u(t)$, per unit of susceptible. On the other hand, the fraction of susceptible individuals can also grow, with a rate which is proportional to the fraction of society that it is immunized; namely, $\eta R(t)$ for some $\eta>0$. This can be due to the non perfect immunization effect of the vaccinations. 


Let $U:=[0,\overline{U}]$, for some $\overline{U} >0$, and assume that the vaccination rate $u(\cdot)$ belongs to the set
\begin{equation}
\label{eq:set-U}
\mathcal{U}:=\Big\{u:\, \mathbb{R}_+ \to U\,\,\text{measurable}\Big\}.
\end{equation} 
Then, according to the previous considerations, denoting by $f'$ the first-order derivative of a real-valued function $f$, the dynamics of $S(t), I(t)$ and $R(t)$ can be thus written as
\begin{equation}
\label{eq:S}
S'(t) = - \beta S(t) I(t) - u(t)S(t) + \eta R(t), \quad t>0, \qquad S(0)= s,
\end{equation}

\begin{equation}
\label{eq:I}
I'(t) = \beta S(t) I(t) - \gamma I(t), \quad t>0, \qquad I(0)= i,
\end{equation}
and
\begin{equation}
\label{eq:R}
R'(t) = \gamma I(t) - \eta R(t) + u(t)S(t), \quad t>0, \qquad R(0)= r,
\end{equation}
for given nonnegative $s, i$ and $r$ such that $s + i + r=1$. Given that $u(\cdot) \in \mathcal{U}$, the previous system of ODEs is well posed in the Carath\'eodory sense. Notice also that for any $t\geq0$, and for any choice of $u(\cdot) \in \mathcal{U}$, summing up the dynamics of $S, I$ and $R$ we have $(S' + I' + R')(t) = 0$ for all $t>0$, which then implies that $S(t) + I(t) + R(t) = 1$ for all $t\geq0$ as $s+i+r=1$.
Given this fact, it is then sufficient to consider only the dynamics of $(S,I)$, being $R(t)=1-S(t)-I(t)$. Hence, we obtain 
\begin{align}
& S'(t) = - \beta S(t) I(t) - u(t)S(t) + \eta (1-S(t)-I(t)), \quad t>0, \qquad S(0)= s, \label{eq:syst-dyn-S} 
\end{align}
and
\begin{align}
& I'(t) = \beta S(t) I(t) - \gamma I(t), \quad t>0, \qquad I(0)= i. \label{eq:syst-dyn-I} 
\end{align}
From \eqref{eq:syst-dyn-I}, one has for any $t\geq 0$
$$I(t) = i \exp\Big\{\int_0^t(\beta S(q) - \gamma) \d q\Big\},$$
so that $I(t)>0$ for any $t\geq0$ and for any $u(\cdot) \in \mathcal{U}$. Lemma \ref{lemm:SR-nonneg} in the Appendix shows that also $S(t)>0$ and $R(t)>0$ for any $t\geq0$, so that also $S(t)<1$, $R(t)<1$, $I(t)<1$ for any $t\geq0$. In the following, we assume that $(s,i) \in (0,1)^2$ are such that $s+i \in (0,1)$\footnote{The choice of considering $s+i<1$ -- i.e.\ of having an initial strictly positive percentage of immunized -- is only done in order to deal with an open set in the subsequent mathematical formulation of the problem. As a matter of fact, such a condition is not restrictive from the technical point of view as our results still apply if $s+i<\ell$, for some $\ell>1$, thus covering the case $s+i=1$ as well.}.


\subsection{The Social Planner Problem}
\label{sec:PB}

The epidemic generates social costs. These might arise because of lost gross domestic product (GDP) due to inability of working for infected, or because of an overwhelming of the national health-care system etc. Also, one can imagine that the more susceptible are, the larger is the probability of an additional wave of the epidemic and, therefore, of additional societal stress.
The social planner thus employs a vaccination policy aiming at reducing the number of susceptible individuals. These actions, however, come with a cost, which increases with the amplitude of the effort. The cost is due, e.g., to the arrangement of vaccination hubs and to the employment of medical staff.

The social planner thus aims at minimizing the cost functional
\begin{equation}
\label{eq:Probl0}
\int_0^{\infty} {e^{-r t}} C\big(S(t),I(t), u(t)\big) \d t.
\end{equation}
Here, $r\geq 0$ measures the social planner's time preferences, and $C:(0,1)^2\times \mathbb{R}_+ \to \mathbb{R}_+$ is a running cost function measuring the negative impact of the disease on the socio-economic state and on the  public health, as well as the costs arising because of vaccination policies. The following requirements are satisfied by $C$. Below, and in the rest of this paper, with a slight abuse of notation, we indicate by $|\,\cdot\,|$ both the absolute value and the Euclidean norm in $\mathbb{R}^2$.
\begin{assumption}
\label{ass:C}
\hspace{10cm}
\begin{itemize}
\item[(i)] $C$ is continuous and bounded. 
\item[(ii)] $u \mapsto C(s,i,u)$ is strictly convex for any $(s,i)\in(0,1)^2$.
\item[(iii)] $C$ is Lipschitz continuous with respect to $(s,i)$, uniformly in $u$; that is, there exists $K>0$ such that for any $u \in U$ we have that
$$|C(s,i,u) - C(s',i',u)| \leq K|(s,i)-(s',i')|, \quad \forall (s,i),(s',i')\in (0,1)^2.$$
\item[(iv)] $(s,i)\mapsto C(s,i,u)$ is  semiconcave
uniformly with respect to $u \in U$; that is, there exists ${K}>0$ such that for any $u \in U$ and any $\mu \in [0,1]$ one has $\forall (s,i),(s',i')\in (0,1)^2$
\begin{align*}
&\mu C(s,i,u) + (1-\mu) C(s',i',u) - C\big(\mu (s,i) + (1-\mu) (s,i'),u\big) \leq {K}\mu(1-\mu)|(s,i)-(s',i)'|^2.
\end{align*}
\end{itemize}
\end{assumption}
Without loss of generality, we also take $\inf_{\mathcal{M}\times U} C=0$. 
Convexity of $u\mapsto C(s,i,u)$ describes that marginal costs of vaccinations are increasing: E.g., an additional hiring of medical staff for vaccination might have a larger cost in a society that has already devoted many resources to the fight of the epidemics. Finally, the Lipschitz and semiconcavity property of $C(\cdot,\cdot,u)$ are technical requirements that will be important in the next section.

In order to tackle the social planner problem with techniques from dynamic programming, it is convenient to keep track of the initial values of $(S(\cdot),I(\cdot))$. We therefore let $x:=(s,i)$ and set 
\begin{equation}
\label{set:M}
\mathcal{M}:=\big\{x:=(s,i)\in \R^2:\,\, x\in (0,1)^2,\,\, s+i < 1\big\}.
\end{equation}
As we already observed, by Lemma  \ref{lemm:SR-nonneg}, the controlled dynamics of $(S(\cdot),I(\cdot))$ evolves within the the set $\mathcal{M}$. When needed, we stress the dependency of $(S(\cdot),I(\cdot))$ with respect to $x\in \mathcal{M}$ and $u(\cdot)\in\mathcal{U}$ by writing $(S^{x,u}(\cdot),I^{x,u}(\cdot))$. We shall also simply set $(S^{x}(\cdot),I^{x}(\cdot)):=(S^{x,0}(\cdot),I^{x,0}(\cdot))$ to denote the solutions to \eqref{eq:syst-dyn-S} and \eqref{eq:syst-dyn-I} when $u(\cdot) \equiv 0$.

Then, we introduce the problem's value function
\begin{equation}
\label{eq:V}
V(x) := \inf_{u(\cdot) \in \mathcal{U}}\int_0^{\infty} e^{-r t} C\big(S^{x,u}(t), I^{x,u}(t),u(t)\big) \d t, \ \ \ \ x\in \mathcal{M}.
\end{equation}
Since  $C$ is nonnegative and bounded, we have the following.
\begin{proposition}
 $V$ is well defined, nonnegative, and bounded. 
\end{proposition}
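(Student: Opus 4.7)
The plan is to verify the three properties in turn, each reducing to elementary facts about the controlled dynamics together with Assumption \ref{ass:C}(i). First, I would check well-definedness by confirming that, for every admissible $u(\cdot)\in\mathcal{U}$ and every initial state $x\in\mathcal{M}$, the integrand
$$t\mapsto e^{-rt}\, C\big(S^{x,u}(t),I^{x,u}(t),u(t)\big)$$
is Lebesgue measurable on $[0,\infty)$. This follows from the continuity of $C$ (Assumption \ref{ass:C}(i)), the continuity in $t$ of the Carath\'eodory trajectory $(S^{x,u}(\cdot),I^{x,u}(\cdot))$ solving \eqref{eq:syst-dyn-S}--\eqref{eq:syst-dyn-I}, and the measurability of $u(\cdot)$ from the definition of $\mathcal{U}$ in \eqref{eq:set-U}. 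Hence the integral in \eqref{eq:V} exists in $[0,+\infty]$, and so does its infimum over $\mathcal{U}$.

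Nonnegativity is immediate: $C\geq 0$ and $e^{-rt}>0$ force every admissible cost functional to be nonnegative, and so is its infimum.

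For boundedness, I would exploit the uniform bound $M:=\sup C<\infty$ provided by Assumption \ref{ass:C}(i). In the relevant infinite-horizon regime $r>0$, a direct majorization gives, for every $x\in\mathcal{M}$ and every $u(\cdot)\in\mathcal{U}$,
$$\int_0^{\infty} e^{-rt}\, C\big(S^{x,u}(t),I^{x,u}(t),u(t)\big)\,\d t \;\leq\; M\int_0^{\infty} e^{-rt}\,\d t \;=\; \frac{M}{r},$$
uniformly in the initial condition and the control. Taking the infimum over $u(\cdot)\in\mathcal{U}$ preserves the bound, so $V(x)\leq M/r$ on $\mathcal{M}$.

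There is no real obstacle here: this proposition is essentially a housekeeping statement that justifies the definition \eqref{eq:V} and fixes the functional framework used in the subsequent analysis. The only point worth flagging is the degenerate case $r=0$, in which the above majorization is vacuous and a separate argument (e.g.\ displaying a feasible control that drives the state into a region where $C$ is arbitrarily small, combined with the normalization $\inf_{\mathcal{M}\times U} C=0$) would be needed to ensure finiteness of $V$; in the standard discounted setting $r>0$ the three claims follow at once from the estimate above.
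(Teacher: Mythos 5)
Your proof is correct and takes essentially the same approach as the paper, which dispatches the proposition in one line (``Since $C$ is nonnegative and bounded, we have the following.''), i.e.\ exactly your $M/r$ majorization together with nonnegativity of the integrand. Your caveat about $r=0$ is a fair observation---the paper permits $r\geq 0$ but its implicit argument, like yours, genuinely requires $r>0$ for the boundedness claim---though this is a gap in the paper's hypotheses rather than in your proof.
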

In the next section we will show that $V$ is semiconcave, solves the corresponding dynamic programming equation in the viscosity sense, and we will  also provide an optimal control in feedback form.


\section{The Solution to the Social Planner Problem}
\label{sec:mainresult}

Let $x=(s,i) \in \mathcal{M}$ and set
\begin{equation}
\label{eq:def-b}
b(x,u):= ( -\beta s i -u s+\eta (1-s-i), \beta s i-\gamma i), \quad x\in \mathcal{M}.
\end{equation}
In light of the dynamic programming principle (see, e.g., \cite{YongZhou1999}), we expect that $V$ should identify with a suitable solution to the Hamilton-Jacobi-Bellman (HJB) equation
\begin{equation}
\label{eq:HJB}
rv(x) = \mathcal{H} (x,Dv(x)), \quad x=(s,i) \in \mathcal{M},
\end{equation}
where  $Dv:=(v_s,v_i)$ denotes the gradient of $v$ (being $v_s$ and $v_i$ the partial derivatives in the $s$ and $i$ direction respectively) and 
$$\mathcal{H}(x,p)= \inf_{u \in U}\mathcal{H}_{{cv}}(x,p;u), \quad x=(s,i) \in \mathcal{M}, \ p=(p_s,p_i) \in \R^{2},$$
with
$$\mathcal{H}_{{cv}}(x,p;u)=\langle b(x,u), p\rangle+{C}(x,u), \quad x=(s,i) \in \mathcal{M}, \ p=(p_s,p_i)  \in \R^{2}.$$ 
Here, and in the sequel, $\langle \cdot, \cdot \rangle$ denotes the scalar product in $\mathbb{R}^2$.

Defining the linear operator 
\begin{equation}
\label{linearL}
(\mathcal{L}v)(x) :=\beta si (v_i(s,i)-v_s(s,i)+\eta (1-s-i)v_s(s,i)-\gamma i v_{i}(s,i), \ \ \ \ x=(s,i) \in \mathcal{M},
\end{equation}
we can separate the linear part in the HJB equation \eqref{eq:HJB} and write
\begin{equation}
\label{eq:HJBexplicit}
(r-\mathcal{L})v(x) = C^{\star}(x,v_{s}(x)),
\end{equation}
where 
$$C^{\star}(s,i,p_{s})=\inf_{u\in U} \left\{C(s,i,u)-usp_{s}\right\}, \ \ p_s\in\mathbb{R}.$$

For future frequent use, given an open set $\mathcal{O} \subseteq \mathbb{R}^2$ and a function $f:\mathcal{O} \to \mathbb{R}$, we denote by $D^+f$ (resp., $D^-f$) the supergradient (respectively, subgradient) of $f$; namely, for any $x \in \mathcal{O}$,
\begin{equation}
\label{eq:superdiff}
D^+f(x):=\Big\{ p \in \mathbb{R}^2:\, \liminf_{y \to x} \frac{f(y) - f(x) - \langle p, y-x \rangle}{|y-x|} \geq 0\Big\},
\end{equation}
and
\begin{equation}
\label{eq:subdiff}
D^-f(x):=\Big\{ p \in \mathbb{R}^2:\, \limsup_{y \to x} \frac{f(y) - f(x) - \langle p, y-x \rangle}{|y-x|} \leq 0\Big\}.
\end{equation}

Then, due to Assumption \ref{ass:C}, we have the following preliminary result which is the stationary version of  Theorems 7.4.11 and 7.4.14 in \cite{CS} (by taking $g\equiv 0$ therein).
\begin{theorem}
\label{thm:viscosity}
$V$ is  semiconcave on $\mathcal{M}$ and solves the HJB equation \eqref{eq:HJB} in the viscosity sense on $\mathcal{M}$; that is, 
$$
\mbox{(subsolution)} \ \ \ \ 
rV(x) \leq  \inf_{u \in U}\big\{\langle b(x,u), p \rangle +{C}(x,u)\big\}, \qquad \forall p:=(p_s,p_i)\in D^+V(x);$$
$$
\mbox{(supersolution)} \ \ \ \ 
rV(x)\geq  \inf_{u \in U}\big\{\langle b(x,u), p\rangle +{C}(x,u)\big\}, \qquad \forall p:=(p_s,p_i)\in D^-V(x).$$
\end{theorem}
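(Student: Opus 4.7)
My plan is to combine a dynamic programming principle with the smoothness of the drift $b$ and the regularity of $C$. A key observation is that $b$ is polynomial in $x=(s,i)$, so $b$ is $C^{\infty}$ in $x$ with all partial derivatives uniformly bounded on $\overline{\mathcal{M}}$ and uniformly in $u\in U=[0,\overline U]$.

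First, I would establish the dynamic programming principle
$$V(x) = \inf_{u(\cdot) \in \mathcal{U}} \left\{ \int_0^t e^{-rs} C\bigl(S^{x,u}(s), I^{x,u}(s), u(s)\bigr)\,\d s + e^{-rt} V\bigl(S^{x,u}(t), I^{x,u}(t)\bigr)\right\}$$
for all $t\ge 0$ and $x\in\mathcal{M}$, by the standard measurable-selection argument (made easy by the boundedness of $C$). From the DPP the viscosity sub- and supersolution properties follow in the textbook way: test $V$ with a $C^1$ function $\phi$ touching at $x_0$ from above (resp.\ below), insert constant controls $u\in U$ on $[0,t]$, divide by $t$ and let $t\downarrow 0$; continuity of $b$, $C$ and $V$ yields the inequalities, which are then re-expressed in terms of $p\in D^+V(x_0)$ or $p\in D^-V(x_0)$ via the characterization of super/subgradients through smooth tangents.

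The more substantial part is semiconcavity. My plan is to show that each open-loop functional $J^{u}(x) := \int_0^{\infty} e^{-rt} C\bigl(S^{x,u}(t), I^{x,u}(t), u(t)\bigr)\,\d t$ is semiconcave in $x$ with a constant uniform in $u(\cdot)\in\mathcal{U}$; semiconcavity of $V=\inf_u J^u$ then follows because the pointwise infimum of a family of functions sharing a common semiconcavity constant is itself semiconcave with that constant. For fixed $u$, pick $x_1,x_2\in\mathcal{M}$, $\mu\in[0,1]$, set $x_\mu:=\mu x_1+(1-\mu)x_2$, and denote the controlled flow by $\Phi^u_t(x):=(S^{x,u}(t),I^{x,u}(t))$. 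Inserting the midpoint $\mu\Phi^u_t(x_1)+(1-\mu)\Phi^u_t(x_2)$ of the two state-trajectories and invoking Assumption \ref{ass:C}(iii)--(iv), one obtains
$$\mu J^u(x_1)+(1-\mu)J^u(x_2)-J^u(x_\mu)\;\le\;\int_0^\infty e^{-rt}\Delta^u(t)\,\d t,$$
where $\Delta^u(t):=K\mu(1-\mu)\bigl|\Phi^{u}_t(x_1)-\Phi^{u}_t(x_2)\bigr|^2+K\bigl|\mu\Phi^{u}_t(x_1)+(1-\mu)\Phi^{u}_t(x_2)-\Phi^{u}_t(x_\mu)\bigr|$. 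A Gronwall estimate on the first-order variational equation bounds $|\Phi^u_t(x_1)-\Phi^u_t(x_2)|$ by $e^{L_b t}|x_1-x_2|$; a second-order Gronwall estimate, driven by the uniformly bounded $D^2_x b$, bounds the convexity defect by $c\,\mu(1-\mu)e^{2L_b t}|x_1-x_2|^2$.

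The main obstacle is the resulting infinite-horizon integral $\int_0^\infty e^{-rt}e^{2L_b t}\,\d t$, which converges only under a compatibility between $r$ and the expansion rate $L_b$. On the simplex-like region $\mathcal{M}$, however, the mass conservation $S+I+R\equiv 1$ confines the trajectories, which typically permits replacing $L_b$ by a sharper effective Lyapunov exponent; this finer propagation estimate is exactly what is packaged by \cite[Thms.\ 7.4.11 and 7.4.14]{CS}, invoked here as a ready-made tool, and it is the step to which I would devote the bulk of the work in a fully self-contained argument.
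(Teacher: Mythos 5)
The paper gives no self-contained proof of this theorem: it is stated as the ``stationary version'' of Theorems 7.4.11 and 7.4.14 of \cite{CS} (with $g\equiv 0$), so your reconstruction of the argument behind that citation is the right object to assess. Your skeleton is the correct one and matches what those theorems package: the DPP plus the constant-control/$\varepsilon$-optimal-control test-function argument for the viscosity inequalities, and, for semiconcavity, the uniform-in-$u(\cdot)$ semiconcavity of the open-loop functionals $J^u$ obtained by inserting the point $\mu\Phi^u_t(x_1)+(1-\mu)\Phi^u_t(x_2)$, using Assumption \ref{ass:C}(iii)--(iv), and controlling the flow by first- and second-order Gronwall estimates (all ingredients are available here since $b$ is quadratic in $x$ with derivatives bounded on $\overline{\mathcal M}\times U$, and an infimum of functions with a common linear semiconcavity modulus keeps that modulus).

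The gap is in your final paragraph. The obstacle you correctly single out --- convergence of $\int_0^\infty e^{-rt}e^{2L_b t}\,\d t$ --- is genuine, but neither of your proposed resolutions works. Theorems 7.4.11 and 7.4.14 of \cite{CS} are finite-horizon results, where $\int_0^T e^{2L_b t}\,\d t$ is harmless; they contain no ``finer propagation estimate'' exploiting the SIRS structure that you could import for the infinite-horizon problem. Mass conservation only confines trajectories to a bounded set: it gives $|\Phi^u_t(x_1)-\Phi^u_t(x_2)|^2\le\min\{e^{2L_b t}|x_1-x_2|^2,\,2\}$, and integrating this against $e^{-rt}$ does not produce a bound of order $|x_1-x_2|^2$, which is what semiconcavity requires. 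The standard fix for the stationary problem is to assume the discount rate dominates the expansion rate of the flow, roughly $r>2L_b$ with $L_b=\sup_{u\in U}\mathrm{Lip}\big(b(\cdot,u);\overline{\mathcal M}\big)$ (and already $r>L_b$ for the Lipschitz continuity of $V$); this constant is finite and computable here, but the theorem as stated imposes no such restriction, so you must either add it or establish a genuinely contractive (control-uniform) estimate on differences of trajectories, e.g.\ via the Lyapunov structure of the SIRS dynamics --- which you gesture at but do not prove. As written, the semiconcavity half of your argument is incomplete at exactly this point.
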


Let now
\begin{equation}
\label{eq:def-Ustar}
U^{\star}(s,i,p_s):= \mbox{argmin}_{u \in U} \left\{C(s,i,u)- u s p_{s}\right\}, \quad (s,i,p_s) \in \mathcal{M} \times \mathbb{R},
\end{equation}
which, due to the strict convexity of $C(x,\cdot)$ (cf.\ Assumption \ref{ass:C}), exists unique and is continuous on $\mathcal{M} \times \mathbb{R}$ by the Berge's maximum theorem.
In light of the semiconcavity of $V$ and of the proof of the next verification theorem, let us now recall some well-known properties of semiconcave functions for the reader's convenience.
\begin{proposition}
\label{prop:convexanal}
Let $\mathcal{O}\subseteq \mathbb{R}^n$ and $f:\mathcal{O}\subseteq \mathbb{R}^n\to \mathbb{R}$ be semiconcave. Then the following hold true:
\begin{itemize}
\item[(a)] $D^{+}f(x)$ is nonempty, closed and convex for each $x\in\mathcal{O}$.
\item[(b)] The multi-valued map $x\mapsto D^{+}f(x)$ is locally bounded.
\item[(c)] $f$ is a.e.\ differentiable over $\mathcal{O}$.
\item[(d)] Given and fixed $x\in\mathcal{O}$, we have
$$
D^{+}f(x)= \mbox{co}\, D^{*}f(x),
$$
where `$\mbox{co}$' denotes the \emph{convex hull} and $D^{*}f(x)$ is the set of \emph{reachable gradients}:
$$D^{*}f(x):=\left\{\lim_{n\rightarrow \infty} Df(x_{n}): \ \exists Df(x_{n}) \ \mbox{and} \ x_{n}\rightarrow x\right\}. $$
\item[(e)] Given and fixed $\xi\in\mathbb{R}^{n}$ and setting
$$
f_{\xi}^{\pm}(x):=\lim_{r\rightarrow 0^{\pm}} \frac{ f(x+r\xi)- f(x)}{r}
$$
- which exists by semiconcavity - we have
\begin{equation}
\label{eq:dir-der}
f_{\xi}^{+}(x)=\min_{p\in D^{+}f(x)}\langle p,\xi\rangle, \ \ \  f_{\xi}^{-}(x)=\max_{p\in D^{+}f(x)}\langle p,\xi\rangle.
\end{equation}
\item[(f)] $D^{+}f(x)$ is compact and convex.
\end{itemize}
\end{proposition}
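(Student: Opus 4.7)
The plan is to reduce each item to a classical property of concave (equivalently, convex) functions via the standard smoothing device. By the definition of semiconcavity there exists a constant $K>0$ such that $g(y):=f(y)-\frac{K}{2}|y|^{2}$ is concave on every convex subset of $\mathcal{O}$. Expanding $|y|^{2}-|x|^{2}=2\langle x,y-x\rangle+|y-x|^{2}$ inside the $\liminf$ in \eqref{eq:superdiff} and absorbing the resulting $\frac{K}{2}|y-x|$ term (which is $o(1)$) yields the crucial identity
\begin{equation*}
D^{+}f(x)=D^{+}g(x)+Kx,
\end{equation*}
where $D^{+}g(x)$ is the (concave) superdifferential of $g$ at $x$. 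Since $-g$ is convex, $D^{+}g(x)=-\partial(-g)(x)$ coincides up to a sign change with the classical convex subdifferential, and every standard set-valued calculus is available.

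Items (a), (b), and (f) now follow at once: the convex subdifferential of the convex function $-g$ is nonempty, convex, closed, compact, and locally bounded on the interior of its domain; these properties transfer to $D^{+}f(x)$ via the continuous affine shift $p\mapsto p+Kx$. For (c), a convex function is locally Lipschitz in the interior of its domain and hence differentiable almost everywhere by Rademacher's theorem; applying this to $-g$ and combining with the smoothness of the quadratic correction yields the almost-everywhere differentiability of $f$.

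For (d), the classical representation $\partial(-g)(x)=\mathrm{co}\,\partial^{*}(-g)(x)$ in terms of limiting gradients of a convex function, combined with the relation $Df(x_{n})=Dg(x_{n})+Kx_{n}$ at any differentiability point $x_{n}$ and the continuity of $x\mapsto Kx$, implies that reachable gradients of $f$ are exactly the affine shift of those of $g$; hence $D^{+}f(x)=\mathrm{co}\,D^{*}f(x)$. For (e), concavity of $g$ forces the one-sided difference quotient $r\mapsto\frac{g(x+r\xi)-g(x)}{r}$ to be monotone for $r>0$ and for $r<0$, so $g^{\pm}_{\xi}(x)$ exist and satisfy the standard min/max representation $g^{+}_{\xi}(x)=\min_{q\in D^{+}g(x)}\langle q,\xi\rangle$ and $g^{-}_{\xi}(x)=\max_{q\in D^{+}g(x)}\langle q,\xi\rangle$. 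Writing $f=g+\frac{K}{2}|\cdot|^{2}$ and adding the contribution $K\langle x,\xi\rangle$ from the smooth quadratic part to both sides yields exactly \eqref{eq:dir-der}.

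The only slightly delicate step is the affine identity $D^{+}f(x)=D^{+}g(x)+Kx$ itself; once it is verified by the short direct expansion of the $\liminf$ in \eqref{eq:superdiff} sketched above, each of (a)--(f) becomes a one-line consequence of a standard theorem on convex (or concave) functions, and no serious obstacle is expected.
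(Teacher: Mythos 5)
Your proof is correct, but it takes a genuinely different route from the paper: the paper's ``proof'' of Proposition \ref{prop:convexanal} consists entirely of precise citations to Cannarsa--Sinestrari \cite{CS} (Propositions 3.1.5 and 3.3.4, Theorems 2.1.7 and 3.3.6, plus Rademacher's theorem and the compactness of convex hulls of compact sets), whereas you give a self-contained reduction to classical convex analysis. Your key step --- that semiconcavity with a linear modulus means $g:=f-\tfrac{K}{2}|\cdot|^{2}$ is locally concave, and that the Fr\'echet superdifferential obeys the exact translation rule $D^{+}f(x)=D^{+}g(x)+Kx$ because the quadratic correction contributes only an $o(|y-x|)$ remainder --- is verified correctly by your expansion of the $\liminf$ in \eqref{eq:superdiff}, and each of (a)--(f) then does follow from the corresponding classical statement for the convex function $-g$ (nonemptiness, compactness, convexity and local boundedness of the subdifferential at interior points; local Lipschitz continuity plus Rademacher; Rockafellar's representation of the subdifferential as the convex hull of limiting gradients at interior points; and the support-function formula for one-sided directional derivatives, all of which transfer under the affine shift). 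This is essentially how \cite{CS} itself establishes these facts in the linear-modulus case, so your argument buys self-containedness at no real cost in generality here, since Assumption \ref{ass:C}(iv) is exactly semiconcavity with linear modulus; the citation route of the paper would also cover semiconcave functions with general (non-quadratic) moduli, where the shift trick is unavailable. Two minor points to tidy up: the statement tacitly requires $\mathcal{O}$ to be open (otherwise nonemptiness of $D^{+}f$ and a.e.\ differentiability can fail), and your constant $K$ differs by a factor of $2$ from the one in Assumption \ref{ass:C}(iv); neither affects the argument.
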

\begin{proof}
We simply provide precise reference for the reader's convenience. Item (a) follows by Proposition 3.1.5-(b) and Proposition 3.3.4-(c) in \cite{CS}; Item (b) is due to Theorem 2.1.7 in \cite{CS}; Item (c) is then implied by Item (b) and Rademacher's Theorem; Items (d) and (e) follow by Theorem 3.3.6 in \cite{CS}; Item (f) is due to Item (d) and the fact that the convex hull of a compact set - such as $D^{*}f(x)$, by definition - is compact (cf. Corollary A.1.7 in \cite{CS}).
\end{proof}

We are now ready to state and prove the main result of this section, namely a verification theorem for viscosity solutions to the HJB equation \eqref{eq:HJB}. Its proof is inspired by that of Theorem 3.9 in \cite{YongZhou1999}, which is, however, thoroughly adapted and expanded to the present setting by suitably exploiting the semiconcavity of $V$, and thus the subsequent properties of its supergradient (cf.\ Proposition \ref{prop:convexanal}).

In the following, given a semiconcave function $v:\mathcal{M}\to \R$ we set\footnote{Hereafter, the superscript $\pm$ means, as usual, either $+$ or $-$.}
$$\partial^{\pm}v_{s}:= v^{\pm}_{(1,0)}, \ \ \ \ 	 \ \ \ \ \ \partial^{*}_{s}v:=\{\partial^{+}_{s}v,\partial^{-}_{s}v\}.$$

\begin{theorem}
\label{thm:verification}[Non-smooth Verification Theorem]
Let $v:\mathcal{M} \to \mathbb{R}_+$ be  semiconcave, bounded  and nonnegative. Then the following hold true:
\begin{enumerate}
\item If $v$ is a viscosity subsolution to the HJB equation, then  $v\leq V$ on $\mathcal{M}$.
\item Recall \eqref{eq:def-Ustar}. Let $x:=(s,i)\in \mathcal{M}$, let  $v$ be a viscosity supersolution to the HJB equation, and let  $u^{\star}\in\mathcal{U}$ be such that, denoting by $(S^{\star}(\cdot),I^\star(\cdot))$ the state trajectory associated to $u^{\star}$, the following holds:

\begin{equation}
\label{eq:OC}
u^{\star}(t)\in U^{\star}\left(S^{\star}(t),I^{\star}(t),\partial^{*}_{s}v (S^{\star}(t),I^{\star}(t))\right), \ \ \ \mbox{for a.e.} \ t\geq 0.
\end{equation}
 Then $v(s,i)\geq J(s,i;u^{\star})$.
\end{enumerate}
\end{theorem}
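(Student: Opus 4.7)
The plan is to establish both parts by computing the derivative of $\varphi(t) := e^{-rt} v(x(t))$ along the relevant trajectory $x(t)$, comparing it to $-e^{-rt} C(x(t), u(t))$, and integrating. The key non-smooth chain rule is the following: since $v$ is semiconcave, it is locally Lipschitz, so $\psi(t) := v(x(t))$ is absolutely continuous along any absolutely continuous trajectory, hence differentiable a.e. Applying the defining semiconcavity inequality $v(y) \leq v(x) + \langle p, y-x\rangle + K|y-x|^{2}$ (valid for any $p \in D^{+}v(x)$) at $x = x(t)$ and $y = x(t+h)$, dividing by $h$, and letting $h \to 0^{\pm}$, one obtains $\psi'(t) \leq \langle p, x'(t)\rangle$ from $h\to 0^{+}$ and $\psi'(t) \geq \langle p, x'(t)\rangle$ from $h\to 0^{-}$. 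Consequently, at a.e. $t$,
$$\psi'(t) \,=\, \langle p, x'(t)\rangle \qquad \text{for every } p \in D^{+}v(x(t));$$
in particular $\langle p, b(x(t), u(t))\rangle$ is independent of the choice of $p \in D^{+}v(x(t))$.

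For Part~(1), fix any $u \in \mathcal{U}$ with trajectory $x(t) = (S^{x,u}(t), I^{x,u}(t))$ and pick any $p(t) \in D^{+}v(x(t))$, which is nonempty by Proposition~\ref{prop:convexanal}(a). The viscosity subsolution property yields
$$rv(x(t)) \,\leq\, \langle b(x(t), u(t)), p(t)\rangle + C(x(t), u(t)) \,=\, \psi'(t) + C(x(t), u(t)),$$
whence $(e^{-rt}v(x(t)))' = e^{-rt}(-rv + \psi') \geq -e^{-rt} C$ a.e. Integrating from $0$ to $T$, then letting $T \to \infty$ (using boundedness and nonnegativity of $v$ together with $r>0$), one recovers $v(x) \leq J(x; u)$, and taking the infimum over $u$ gives $v \leq V$.

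Part~(2) is subtler because the supersolution condition is formulated on the subgradient $D^{-}v$, which is typically empty at non-differentiability points of a semiconcave $v$. The first step is therefore to extend the supersolution inequality to the reachable-gradient set $D^{*}v$: for each $p^{*} \in D^{*}v(x)$ take $x_{n}\to x$ with $v$ differentiable at $x_{n}$ and $Dv(x_{n})\to p^{*}$, apply the supersolution property at $x_{n}$ (where $Dv(x_n) \in D^{-}v(x_{n})$), and pass to the limit by continuity of $b$, $C$, $v$. Next, note that $u$ enters the Hamiltonian only via $-usp_{s} + C(x,u)$, so $U^{*}(x, p_{s})$ depends on $p$ only through its first component. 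Combining Proposition~\ref{prop:convexanal}(d) ($D^{+}v = \mathrm{co}\, D^{*}v$) with Proposition~\ref{prop:convexanal}(e), the one-sided partials $\partial_{s}^{\pm}v(x)$ coincide with the minimum/maximum of $p_{s}$ over $D^{+}v(x)$; since a linear functional attains its extremes on the convex hull at extreme points of the underlying set, one can select $p^{*}(t) \in D^{*}v(x^{*}(t))$ whose $s$-component equals the one used to define $u^{*}(t)$ via \eqref{eq:OC}. Then $u^{*}(t)$ attains the minimum of $\mathcal{H}_{cv}(x^{*}(t), p^{*}(t); \cdot)$, and the extended supersolution inequality yields
$$rv(x^{*}(t)) \,\geq\, \langle b(x^{*}(t), u^{*}(t)), p^{*}(t)\rangle + C(x^{*}(t), u^{*}(t)) \,=\, \psi^{\star\prime}(t) + C(x^{*}(t), u^{*}(t)).$$
Hence $(e^{-rt} v(x^{*}(t)))' \leq -e^{-rt} C$ a.e.; integrating and using $v \geq 0$ gives $v(x) \geq \int_{0}^{T} e^{-rt} C\, dt$ for every $T$, and consequently $v(x) \geq J(x; u^{*})$.

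The main obstacle is the construction of a specific $p^{*}(t) \in D^{*}v(x^{*}(t))$ whose $s$-component matches the one-sided partial derivative prescribed by the feedback \eqref{eq:OC}. This requires simultaneously (i) upgrading the viscosity supersolution from the (possibly empty) $D^{-}v$ to the reachable gradients $D^{*}v$, (ii) exploiting $D^{+}v = \mathrm{co}\, D^{*}v$ together with Proposition~\ref{prop:convexanal}(e) to realize the extreme values of $p_{s}$ on $D^{+}v$ as actual reachable gradients, and (iii) leveraging the fact that the control enters the Hamiltonian only through a term linear in $p_{s}$ so that the argmin depends solely on $p_{s}$. Once the semiconcave chain rule and this matching are in place, the rest is a faithful adaptation of the smooth pointwise-in-$t$ verification argument in \cite{YongZhou1999}.
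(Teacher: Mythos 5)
Your proposal is correct and follows essentially the same route as the paper's proof: a semiconcave chain rule along the trajectory for Part (1), and for Part (2) the upgrade of the supersolution inequality from $D^-v$ to the reachable gradients $D^{*}v$ by approximation at differentiability points, followed by the selection of a reachable gradient whose $s$-component realizes $\partial^{\pm}_{s}v$, exploiting that the argmin depends on $p$ only through $p_s$. The only cosmetic differences are that you use the exact two-sided identity $\frac{\d}{\d t}v(x(t))=\langle p,\dot x(t)\rangle$ for all $p\in D^{+}v(x(t))$ where the paper works with one-sided difference quotients and inequalities, and that you invoke the general fact that a linear functional attains its extrema over $\mathrm{co}\,D^{*}v$ at points of $D^{*}v$ where the paper exhibits the explicit extreme point $(\partial^{\pm}_{s}v(x),\underline{p}^{\pm}_{i}(x))$.
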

\begin{proof}

Recall $b$ as in \eqref{eq:def-b}. Given an  admissible control $u(\cdot)$ and the corresponding controlled trajectory $X(\cdot)=(S(\cdot), I(\cdot))$, for future use throughout this proof, define the set
\begin{align*}
&\mathcal{T}= \mathcal{T}_{{u(\cdot)}}:=\big\{t>0: v(X(\cdot))\,\, \text{is differentiable and}\,\, \lim_{h \rightarrow 0+ }\frac{1}{h} \int_{t-h}^t b(X(s),u(s)) \d s \\
&\hspace{2cm} = \lim_{h \rightarrow 0+ }\frac{1}{h} \int_{t}^{t+h} b(X(s),u(s)) \d s = b(X(t),u(t))\big\}.
\end{align*}
Notice that $\mathcal{T}$ has full measure. Indeed, $b(X(\cdot),u(\cdot)) \in L^1_{\text{loc}}(\mathbb{R}^+)$; furthermore, because of the Lipschitz property of $t \mapsto X(t)$ and the locally Lipschitz property of $v$, which follows by semiconcavity, we have that $t\mapsto v(X(t))$ is locally Lipschitz continuous, and, hence, it is differentiable a.e. 
\vspace{0.25cm} 
 
\emph{Step 1.} Let $\bar u(\cdot)$ be an admissible control and $\bar X(\cdot):=(\bar S(\cdot),\bar I(\cdot))$ be the associated state trajectory. 
In order to simplify notation, from now on, we set $\bar b(t):=b(\bar{X}(t),\bar u(t))$. 

Let now $t\in \mathcal{T}$ and let $\bar p(t)\in D^{+}  v(\bar X(t))$. We then have:
\begin{align}
\label{der-v}
\frac{\d}{\d t} e^{-r t}v(\bar X(t)) &=  \lim_{h\rightarrow 0^{+}}  \frac{-e^{-r (t-h)}v(\bar X(t-h))+e^{-rt}v(\bar X (t))}{h}  \\ 
&=e^{-r t} \lim_{h\rightarrow 0^{+}}  \frac{e^{r h}v(\bar X (t)) -e^{r h}v(\bar X(t-h)) +v(\bar X (t)) - e^{r h}v(\bar X (t))}{h} \nonumber \\
&= e^{-r t} \bigg(\lim_{h\rightarrow 0^{+}}e^{rh}\frac{v(\bar X (t-h) + h\bar b(t) + o(h)) - v(\bar X(t-h))}{h} - rv(\bar X(t))\bigg) \nonumber \\
& \geq e^{-rt}\left(\langle \bar p(t), \bar b(t)\rangle - rv(\bar X(t)) \right), \nonumber
\end{align}
where the last inequality follows by the fact that $\bar p(t)\in D^{+}  v(\bar X(t))$.

Since now $v$ is a viscosity subsolution to the HJB equation, we find from \eqref{der-v}
\begin{equation}
\label{eq:der-v-2}
\frac{\d}{\d t} e^{-r t}v(\bar X(t)) \geq - rv(\bar X(t))+  \langle \bar p(t), \bar b(t)\rangle \geq - C(\bar{X}(t),\bar u(t)).
\end{equation}
On the other hand, because of the Lipschitz property of $v(\bar X(\cdot))$ we can write, for $\bar X(0)=x=(s,i) \in \mathcal{M}$,
\begin{equation}
\label{eq:int-by-parts}
e^{-rT}v(\bar X(T))-v(x)=\int_{0}^{T}\frac{\d}{\d t} e^{-r t}v(\bar X(t)) \d t,
\end{equation}
and picking a measurable selection $t\mapsto \bar{p}(t)\in D^{+} v(\bar{X}(t))$ (see, e.g., page 277 in \cite{YongZhou1999}), and using \eqref{eq:der-v-2} in \eqref{eq:int-by-parts}, we find
\begin{align*}
&v(x)\leq e^{-rT}v(\bar{X}(T))+ \int_{0}^{T}e^{{-rt}}C(\bar{X}(t),\bar u(t)) \d t.
\end{align*}
Since  $v$ and $C$ are bounded, we safely take the limit as $T\rightarrow\infty$ obtaining
$$v(x) \leq \int_{0}^{\infty}e^{{-rt}}C(\bar{X}(t),\bar u(t)) \d t.$$
By the arbitrariness of $\bar u$ and $x$ it follows that $v \leq V$ on $\mathcal{M}$, as claimed.
\vspace{0.25cm}

\emph{Step 2.} For $x \in \mathcal{M}$ given and fixed, let 
$$P^{+}(x):= \mbox{argmin}_{p\in D^{+}v(x)}\langle p, (1,0)\rangle, \ \ \ \ P^{-}(x):= \mbox{argmax}_{p\in D^{+}v(x)}\langle p, (1,0)\rangle $$
and notice that 
$$
\langle p, (1,0)\rangle =\partial^{+}_{s}v(x), \ \ \forall p \in P^+(x), \ \ \ \ \langle p, (1,0)\rangle =\partial^{-}_{s}v(x), \ \ \forall p \in P^-(x).
$$ 
Hence, since $P^{\pm}(x)$ is closed, bounded, and convex, it must have the structure
$$
P^{\pm}(x)= \{\partial^{\pm}_{s}v(x)\}\times [\underline{p}^{\pm}_{i}(x), \bar{p}^{\pm}_{i}(x)],
$$
for some $-\infty<\underline{p}^{\pm}_{i}(x)\leq  \bar{p}^{\pm}_{i}(x)<\infty$. The point $(\partial^{\pm}_{s}v(x), \underline{p}^{\pm}_{i}(x))$ is thus an extremal point for the convex compact set $D^{+}v(x)$ (cf.\ Proposition \ref{prop:convexanal}-(e)), and by Proposition \ref{prop:convexanal}-(d) there exists a sequence $x^{\pm}_{n}\rightarrow x$ such that
$$
\exists Dv(x^{\pm}_{n})\rightarrow  (\partial^{\pm}_{s}v(x), \underline{p}^{\pm}_{i}(x))\in D^{+}v(x).
$$
Since $v$ is a viscosity supersolution we have 
$$
rv(x^{\pm}_{n})\geq \inf_{u\in U } \big\{\langle b(x^{\pm}_n,u), Dv(x^{\pm}_{n})\rangle +C(x^{\pm}_{n},u)\big\},
$$ 
which, taking the limit as $n\rightarrow \infty$, yields
\begin{align}
\label{eq:limitHJBbis}
rv(x)&\geq  \inf_{u\in U}\{\langle  b(x,u), (\partial^{\pm}_{s}v(x), \underline{p}_{i}^{\pm}(x)))\rangle +C(x,u)\}.
\end{align}
Using the definition of $U^{\star}$ as in \eqref{eq:def-Ustar}, \eqref{eq:limitHJBbis} gives
\begin{equation}
\label{eq:limitHJB}
rv(x)\geq  \langle  b(x,U^{\star}(x,\partial^{\pm}_sv(x)), (\partial^{\pm}_{s}v(x), \underline{p}_{i}^{\pm}(x))\rangle +C(x,U^{\star}(x,\partial^{\pm}_sv(x))).
\end{equation}

Now, let $u^{\star}(\cdot)$  and $X^{\star}(\cdot):=(S^{\star}(\cdot),I^{\star}(\cdot))$ as in the claim, and take $t \in \mathcal{T}^{\star}=\mathcal{T}_{u^\star(\cdot)}$. Also, let $b^{\star}(\cdot):=b(X^{\star}(\cdot),u^{\star}(\cdot))$.
Using  the definition of superdifferential, 
\begin{align}
\label{eq:deriv-v-3}
\frac{\d}{\d t} e^{-r t}v(X^{\star}(t)) &=  \lim_{h\rightarrow 0^{+}}  \frac{e^{-r (t+h)}v(X^{\star}(t+h))-e^{-rt}v(X^{\star} (t))}{h} \nonumber \\ 
&=e^{-r t} \lim_{h\rightarrow 0^{+}}   \frac{e^{-r h}v(X^{\star}(t+h))- e^{-r h}v(X^{\star} (t)) + e^{-r h}v(X^{\star}(t))-v(X^{\star}(t))}{h} \nonumber \\
& = e^{-r t} \bigg(\lim_{h\rightarrow 0^{+}} e^{-r h}\frac{v(X^{\star}(t+h))- v(X^{\star} (t))}{h} -rv(X^{\star}(t))\bigg) \nonumber \\
&= e^{-r t} \bigg(\lim_{h\rightarrow 0^{+}} e^{-r h}\frac{v(X^{\star}(t) + h b^{\star}(t) + o(h))- v(X^{\star} (t))}{h} - rv(X^{\star}(t))\bigg) \nonumber \\
&\leq e^{-rt}\left(- rv( X^{\star}(t))+  \langle b^{\star}(t), (\partial^{\pm}_{s}v(X^{\star}(t)), \underline{p}_{i}(X^{\star}(t))) \rangle \right),
\end{align}
so that by \eqref{eq:limitHJB} 
\begin{align}
e^{-rt}\left(- rv( X^{\star}(t))+  \langle b^{\star}(t), (\partial^{\pm}_{s}v(X^{\star}(t)), \underline{p}_{i}(X^{\star}(t)))\rangle \right) 
\leq -e^{-rt}C(X^{\star}(t),u^{\star}(t)). \nonumber
\end{align}
Therefore
\begin{equation}\label{eq:deriv-v-3bis}
\frac{\d}{\d t} e^{-r t}v(X^{\star}(t)) \leq -e^{-rt}C(X^{\star}(t),u^{\star}(t)).
\end{equation}
Because of the locally Lipschitz property of $t \mapsto v(X^{\star}(t))$, 
we can write
\begin{equation}\label{gaf}
e^{-rT}v(X^{\star}(T))-v(x)=\int_{0}^{T}\frac{\d}{\d t} e^{-r t}v(X^{\star}(t)) \d t,
\end{equation}
which, together with \eqref{gaf} and the assumed nonnegativity of $v$, yields
\begin{align*}
&v(x)\geq e^{-rT}v(X^{\star}(T)) + \int_{0}^{T}e^{{-rt}}C(X^{\star}(t), u^{\star}(t)) \d t \geq \int_{0}^{T}e^{{-rt}}C(X^{\star}(t), u^{\star}(t)) \d t.
\end{align*}
Taking limits as $T\uparrow \infty$ and using the monotone convergence theorem by nonnegativity of $C$, we finally obtain
$$v(x) \geq \int_{0}^{\infty}e^{{-rt}}C(X^{\star}(t), u^{\star}(t)) \d t.$$
Hence, $v\geq V$ on $\mathcal{M}$, by arbitrariness of $x$. 
\end{proof}
Combining the results of the previous theorem, we see that if both the assumptions of Part (1) and Part (2) are satisfied, then  we find that $v(x)=V(x)$ and  $u^{\star}(\cdot)$ is optimal  starting at $x$.

%
%

Part (2) of Theorem \ref{thm:verification} assumes \eqref{eq:OC}, which in turn holds if a solution to the closed-loop  differential inclusion
\begin{equation}
\label{eq:closedloop-tris}
\begin{cases}
S'(t)\in -\beta S(t)I(t) - S(t)\,U^{\star}\left(S(t),I(t),\partial^{*}_{s}V (S(t),I(t))\right)+\eta (1-S(t)-I(t)),\\
I'(t)= \beta S(t)I(t) -\gamma I(t),\\
\end{cases}
\end{equation}
exists. In order to address this aspect, we consider the closed-loop equation 
\begin{equation}
\label{eq:closedloop-bis}
\begin{cases}
S'(t)= -\beta S(t)I(t) -  S(t)\, U^{\star}\left(S(t),I(t),\partial^{+}_{s}v (S(t),I(t))\right)+\eta (1-S(t)-I(t)),\\
I'(t)= \beta S(t)I(t) -\gamma I(t),\\
\end{cases}
\end{equation}
and
look at it from the point of view of the theory of \emph{Regular Lagrangian Flows}; cf.\ \cite{Ambrosio1} and \cite{Ambrosio2}, among others. For this, we recall the following definition (cf.\ Definition 1 in \cite{Ambrosio1}):

\begin{df}
\label{def:RLF}
Let $d\geq1$, denote by $\mathcal{L}^d$ the Lebesgue measure on $\mathbb{R}^d$, and fix $T>0$. We say that $X(t;x)$ is a Regular Lagrangian Flow associated to a vector field $m:\R^{d}\to\R^{d}$ if:
\begin{enumerate}
\item For $\mathcal{L}^d$-a.e.\ $x \in \mathbb{R}^d$, $t \mapsto X(t;x)$ is an absolutely continuous solution on $[0,T]$ to the ODE
$$\frac{\d}{\d t} X(t;x) = m(X(t;x)), \qquad X(0;x)=x;$$

\item for some constant $C>0$, 
$$\mathcal{L}^d\big(x\in \mathbb{R}^d:\, X(t; x) \in B\big) \leq C\cdot \mathcal{L}^d\big(B\big) \ \ \ \forall t \in [0,T], \  \forall B \subset \mathbb{R}^d \ \mbox{Borel set}.$$
\end{enumerate}

We say that a Regular Lagrangian Flow is unique if, given $X(t;x)$ and $\widetilde{X}(t;x)$ Regular Lagrangian Flows starting from $\mathcal{L}^d$-measurable sets $B_i \subset \mathbb{R}^d$, $i=1,2$, we have that
$$X(t;x) = \widetilde{X}(t;x), \qquad \text{for}\,\,\mathcal{L}^d-\text{a.e.}\, x \in B_1\cap B_2.$$
\end{df}

\begin{proposition}
\label{prop:closedloop} 
Let $v:\mathcal{M} \to \mathbb{R}_+$ be  a semiconcave, bounded,  and nonnegative viscosity solution to \eqref{eq:HJB}. 
Set $$\widetilde{U}^{\star}(s,i):=U^{\star}\left(s,i,\partial^{+}_{s}v (s,i)\right)$$
and assume 
 that 
 \begin{itemize}
 \item[(i)]
 $(s,i,p_s) \mapsto {U}^{\star}(s,i,p_s)$ is locally Lipschitz-continuous on $\mathcal{M} \times \mathbb{R}$;  
\item[(ii)]
$(s \frac{\partial\widetilde{U}^{\star}}{\partial s})^{+}\in L^{\infty}(\mathcal{M})$. 
\end{itemize}
Then, the Regular Lagrangian Flow (closed-loop equation)
\begin{equation}
\label{eq:closedloop-tris}
\begin{cases}
S'(t)=-\beta S(t)I(t) - \widetilde{U}^{\star}\left(S(t),I(t)\right) S(t)+\eta (1-S(t)-I(t)),\\
I'(t)= \beta S(t)I(t) -\gamma I(t),\\
\end{cases}
\end{equation}
with initial data $(S(0),I(0))=(s,i) \in \mathcal{M}$ exists and is unique.
\end{proposition}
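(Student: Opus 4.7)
The plan is to reformulate \eqref{eq:closedloop-tris} as an autonomous ODE $X'(t)=m(X(t))$ driven by a vector field $m:\mathcal{M}\to\mathbb{R}^{2}$ whose regularity places it within the scope of the DiPerna-Lions-Ambrosio theory, and then to invoke the existence/uniqueness result for Regular Lagrangian Flows from \cite{Ambrosio1,Ambrosio2}.

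First I would set
$$m(s,i):=b\bigl((s,i),\widetilde{U}^{\star}(s,i)\bigr)=\bigl(-\beta s i - s\,\widetilde{U}^{\star}(s,i)+\eta(1-s-i),\ \beta s i-\gamma i\bigr),$$
with $b$ as in \eqref{eq:def-b}, so that \eqref{eq:closedloop-tris} reads $X'(t)=m(X(t))$ with $X=(S,I)$. Since $\mathcal{M}$ is bounded and $\widetilde{U}^{\star}\in[0,\overline{U}]$, the vector field $m$ is bounded on $\mathcal{M}$, and a standard cut-off outside $\mathcal{M}$ extends it to a bounded measurable vector field on the whole of $\mathbb{R}^{2}$ without worsening the relevant regularity; this is harmless since, by Lemma \ref{lemm:SR-nonneg} applied to the feedback $u(t)=\widetilde{U}^{\star}(S(t),I(t))$, trajectories issued from $\mathcal{M}$ remain in $\mathcal{M}$.

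Second, and most importantly, I would establish $BV_{\mathrm{loc}}$ regularity of $m$. The smooth part is polynomial, so the task reduces to showing $\widetilde{U}^{\star}\in BV_{\mathrm{loc}}(\mathcal{M})$. By Theorem \ref{thm:viscosity}, $v$ is semiconcave, hence $v-K(s^{2}+i^{2})/2$ is concave for $K$ large enough; concave functions on $\mathbb{R}^{2}$ have a matrix-valued Radon-measure distributional Hessian, so in particular $v_{s}\in BV_{\mathrm{loc}}(\mathcal{M})$, and $v_{s}=\partial_{s}^{+}v$ a.e.\ by Proposition \ref{prop:convexanal}-(c). Combined with the local Lipschitz-continuity of $U^{\star}$ from assumption (i), the BV chain rule yields $\widetilde{U}^{\star}(s,i)=U^{\star}(s,i,v_{s}(s,i))\in BV_{\mathrm{loc}}(\mathcal{M})$, and hence $m\in BV_{\mathrm{loc}}(\mathcal{M};\mathbb{R}^{2})$.

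Third, I would verify the bounded-compressibility condition $(\diver m)^{-}\in L^{\infty}$ required by the theory. A direct computation of the distributional divergence gives
$$\diver m=-\beta i-\widetilde{U}^{\star}-s\,\tfrac{\partial\widetilde{U}^{\star}}{\partial s}-\eta+\beta s-\gamma,$$
so that, uniformly on the bounded set $\mathcal{M}$,
$$(\diver m)^{-}\leq \beta i+\widetilde{U}^{\star}+\eta+\gamma+\Bigl(s\,\tfrac{\partial\widetilde{U}^{\star}}{\partial s}\Bigr)^{+}\in L^{\infty}(\mathcal{M})$$
by assumption (ii) and the boundedness of $\widetilde{U}^{\star}$, $\beta$, $\eta$, $\gamma$ and of $(s,i)\in\mathcal{M}$. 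With $m$ bounded, $BV_{\mathrm{loc}}$, and satisfying the $L^{\infty}$ compressibility condition, existence and uniqueness of the Regular Lagrangian Flow associated to $m$ then follow by invoking the main theorems of \cite{Ambrosio1,Ambrosio2}.

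The main obstacle in carrying out this plan rigorously is the composition step of Step 2: one must justify that the BV chain rule applies when the BV factor is $v_{s}$, a one-sided directional derivative of a semiconcave function that coincides with the distributional partial derivative of $v$ only at points of differentiability of $v$. Once this is settled, the divergence estimate and the application of Ambrosio's theorem are essentially mechanical.
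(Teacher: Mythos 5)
Your proposal follows essentially the same route as the paper's proof: recast \eqref{eq:closedloop-tris} as $X'=m(X)$ with $m(x)=b(x,\widetilde{U}^{\star}(x))$ extended off $\mathcal{M}$, establish $m\in BV_{\mathrm{loc}}$ from the semiconcavity of $v$ together with the local Lipschitz continuity of $U^{\star}$, bound $(\diver m)^{-}$ via assumption (ii), and invoke Ambrosio's existence/uniqueness theorem for Regular Lagrangian Flows. The composition step you flag as the main remaining obstacle is precisely what the paper resolves by citing Evans--Gariepy (Theorem 3, p.~240) for the fact that $\partial_{s}^{+}v$ is locally of bounded variation and Josephy (1981, Theorem 4) for the fact that composing a locally Lipschitz map with a BV function yields a BV function, so your outline is complete once those references are supplied.
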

\begin{proof}
It is enough to embed the closed-loop equation \eqref{eq:closedloop-tris} in the setting of Definition \ref{def:RLF}, and apply Theorem 7 in \cite{Ambrosio1} (see also Theorems 6.2 and 6.4 in \cite{Ambrosio2}), after checking the validity of its hypothesis.

In order to accomplish that, we set 
$$x=(x_1,x_2)=:(s,i) \in \mathcal{M},  \ \ \ \ X(t;x)=(X_1,X_2)(t;x)=:(S(t),I(t))$$ and 
$$m(x):=\Big(-\beta x_1 x_2 - x_{1}\widetilde{U}^{\star}(x_1,x_2)+\eta(1-x_1-x_2), \ \  \beta x_1 x_2 - \gamma x_2\Big), \ \ \ \ x\in\mathcal{M},$$ 
(being extended to $\R^{2}$ by defining it equal to $(0,0)$ on $\mathcal{M}^{c}$).

Firstly, since $v$ is semiconcave by Theorem \ref{thm:viscosity}, it follows by Theorem 3 at page 240 of \cite{EvansGariepy} that $\partial^{+}_{s}v$ is locally of bounded variation on $\mathcal{M}$. Then, as $(s,i,p_s) \mapsto {U}^{\star}(s,i,p_s)$ is locally Lipschitz-continuous by assumption, it follows that $\widetilde{U}^{\star}$ is locally of bounded variation on $\mathcal{M}$ too by Theorem 4 in \cite{Josephy}, so that $m$ is locally of bounded variation on $\R^2$.

Secondly, given the fact that $(x_1,x_2) \in \mathcal{M}$ and $\widetilde{U}^{\star} \in [0, \overline{U}]$, it is readily seen that $\frac{|m(x)|}{1 + |x|}$ is integrable over $\R^{2}$. Furthermore, explicit computations yield in $\mathcal{M}$, for some $K\geq 0$,
$$\big(\text{div}\,m\big)^{-} \leq K +\Big(- x_1 \frac{\partial \widetilde{U}^{\star}}{\partial x_1}\Big)^{-}= K + \Big(x_{1}\frac{\partial \widetilde{U}^{\star}}{\partial x_1}\Big)^{+}.$$
Since $ \big(s\frac{\partial\widetilde{U}^{\star}}{\partial s}\big)^{+} \in L^{\infty}(\mathcal{M})$ by assumption, we conclude.
\end{proof}

\begin{remark}
\label{rem:RLF}
The conditions of Proposition \ref{prop:closedloop}
can be  verified on a case by case basis. For example, they hold in the quadratic example considered in the next section. Let $$
C(s,i,u) =\frac{1}{2} \left(a {i}^2 +b (us)^2 \right), \ \ \ a,b>0.
$$
Then, given $v$ as in Proposition \ref{prop:closedloop}, we have
$\widetilde{U}^\star(s,i)= U^{\star}(s,i,\partial^{+}_{s}v(s,i))$, where

\begin{eqnarray*}
U^{\star}(s,i,p_{s}) &=& 
\left\{\begin{array}{lll}
0 && \textrm{if } p_{s}\le 0,\\\\ 
\frac{p_{s}}{bs} && \textrm{if }  0<p_{s} <b \overline{U}s,\\ \\
\overline{U} && \textrm{if } p_{s}\ge b \overline{U}s,
\end{array}\right.
\end{eqnarray*}
Now,  $(s,i,p_s) \mapsto {U}^{\star}(s,i,p_s)$ is clearly locally Lipschitz continuous. Moreover,
%
%
we notice that
$$
\frac{\partial{U}^{\star}}{\partial s}(s,i,p_s)=\begin{cases}
0, \ \ \ \ \ \ \ \ \ \ \  \mbox{if} \ p_{s}\notin (0,b \overline{U}s),\\
-\frac{p_{s}}{bs^{2}},  \ \ \ \ \ \ \mbox{if} \ \ p_{s}\in (0,b \overline{U}s), \ \ \ 
\end{cases}
 \ \ \ \ \ \frac{\partial{U}^{\star}}{\partial p_s}(s,i,p_s)=\begin{cases}
0, \ \ \ \ \ \ \ \ \ \ \mbox{if} \ p_{s}\notin (0,b \overline{U}s),\\
\frac{1}{bs},  \ \ \ \ \mbox{if} \ \ p_{s}\in (0,b \overline{U}s),
\end{cases}
$$ 
Therefore, $ \frac{\partial {U}^{\star}}{\partial s}\leq 0$ and we have (in the sense of distributions)
\begin{eqnarray*}
s\frac{\partial \widetilde{U}^{\star}}{\partial s}(s,i)= s\frac{\partial{{U}}^{\star}}{\partial s} (s,i, \partial^{+}_{s}v(s,i))+ s \frac{\partial{{U}}^{\star}}{\partial p_s}(s,i, \partial^{+}_{s}v(s,i))\cdot  \frac{\partial}{\partial s}\big(\partial^{+}_{s}v\big)(s,i)
\leq  \frac{1}{b} \frac{\partial}{\partial s}\big(\partial^{+}_{s}v\big)(s,i).
\end{eqnarray*}
Then, the semiconcavity of $v$ allows to conclude $(s \frac{\partial\widetilde{U}^{\star}}{\partial s})^{+}\in L^{\infty}(\mathcal{M})$. 
\end{remark}
As a corollary of Theorems \ref{thm:viscosity} and \ref{thm:verification}, and of Proposition \ref{prop:closedloop}, we obtain uniqueness of viscosity solution to HJB \eqref{eq:HJB}.
\begin{corollary} Let Assumption \ref{ass:C} and the assumptions of Proposition \ref{prop:closedloop} hold. Then $V$ is the unique bounded  locally semiconcave viscosity solution to \eqref{eq:HJB}.
\end{corollary}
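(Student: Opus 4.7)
The plan is to sandwich any competitor $v$ between $V$ and $V$ using the three results already at hand: Theorem \ref{thm:viscosity} guarantees that $V$ itself is a bounded semiconcave viscosity solution, so existence is not at stake and only uniqueness must be established.

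Let $v$ be an arbitrary bounded locally semiconcave viscosity solution to \eqref{eq:HJB}. For the inequality $v\leq V$, note that $v$ is, in particular, a viscosity subsolution; applying Part (1) of Theorem \ref{thm:verification} gives $v\leq V$ on $\mathcal{M}$ (boundedness of $v$ and of $C$, together with $r>0$, suffice to pass to the limit $T\to\infty$ in the verification argument, the role of nonnegativity entering only in Part (2)).

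For the reverse inequality I apply Proposition \ref{prop:closedloop} to $v$: under the standing hypotheses, for $\mathcal{L}^2$-almost every initial datum $x\in\mathcal{M}$ the closed-loop equation \eqref{eq:closedloop-tris} with feedback $\widetilde{U}^{\star}(s,i):=U^{\star}(s,i,\partial^{+}_{s}v(s,i))$ admits a Regular Lagrangian Flow $(S^{\star}(\cdot;x),I^{\star}(\cdot;x))$. Setting $u^{\star}(t):=\widetilde{U}^{\star}(S^{\star}(t),I^{\star}(t))$, which is measurable as a composition of a locally bounded-variation feedback with an absolutely continuous trajectory, we have $u^{\star}(t)\in U^{\star}(S^{\star}(t),I^{\star}(t),\partial^{+}_{s}v(S^{\star}(t),I^{\star}(t)))$, hence a fortiori $u^{\star}(t)\in U^{\star}(S^{\star}(t),I^{\star}(t),\partial^{*}_{s}v(S^{\star}(t),I^{\star}(t)))$, so the optimality condition \eqref{eq:OC} is satisfied. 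Part (2) of Theorem \ref{thm:verification} then yields $v(x)\geq J(x;u^{\star})\geq V(x)$ for $\mathcal{L}^2$-a.e.\ $x\in\mathcal{M}$.

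Combining the two one-sided inequalities gives $v=V$ almost everywhere on $\mathcal{M}$; since both functions are continuous (locally Lipschitz on the open set $\mathcal{M}$, by virtue of local semiconcavity), this equality extends pointwise to all of $\mathcal{M}$. The only delicate point worth mentioning is the mismatch between the pointwise character of the viscosity-solution notion and the ``$\mathcal{L}^2$-a.e.\ initial datum'' output of the Regular Lagrangian Flow framework; the continuity of semiconcave functions closes this gap essentially for free, and is therefore where I would expect a referee to ask for one extra line of justification rather than a genuine obstacle.
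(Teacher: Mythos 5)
Your proposal follows essentially the same route as the paper: existence from Theorem \ref{thm:viscosity}, the inequality $v\leq V$ from Part (1) of Theorem \ref{thm:verification}, the reverse inequality for a.e.\ $x$ from Proposition \ref{prop:closedloop} combined with Part (2), and continuity of semiconcave functions to upgrade the a.e.\ identity to a pointwise one. Your write-up is in fact more explicit than the paper's own proof (which compresses the sandwich argument into a few lines and even misattributes the needed step to ``Part (1)''), and your closing remark about the a.e.-versus-pointwise gap is exactly the point the paper resolves the same way.
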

\begin{proof}
We need to prove uniqueness. By Proposition \ref{prop:closedloop}, a solution to \eqref{eq:closedloop-bis} exists for almost every $(s,i)\in\mathcal{M}$. This provides the control $u^{\star}$ required by Part (1) of Theorem  \ref{thm:verification}. Hence, given any other bounded locally semiconcave viscosity solution $v$ to \eqref{eq:HJB}, we get $v(s,i)=V(s,i)$ for a.e. $(s,i)\in\mathcal{M}$. Since semiconcavity implies continuity, we conclude that $v=V$ on $\mathcal{M}$. 
\end{proof}


\section{A Case Study with Numerical Illustrations}
\label{sec:numerics}

In this section we introduce a case study and provide numerical simulations in order to illustrate the results of the proposed model.
We preliminary consider as benchmark the dynamical system \eqref{eq:syst-dyn-S}-\eqref{eq:syst-dyn-I} in absence of vaccination. We assume that $\mathcal{R}_{o}=\beta/\gamma>1$. In this case the dynamical system has two equilibria:
\begin{equation}\label{equilibriano}
(S^{(1,o)}_{\infty},I^{(1,o)}_{\infty})=\left(\frac{\gamma}{\beta}, \frac{\eta}{\gamma+\eta}\left(1-\frac{\gamma}{\beta}\right)\right), \ \ \ \ (S^{(2,o)}_{\infty},I^{(2,o)}_{\infty})=(1,0).
\end{equation}
The convergence of the system to the first of the above equilibria means that the disease becomes endogenous; the convergence to the second one means that the disease goes to extinction. It is shown in \cite{ORegan-etal} that $(S^{(1,o)}_{\infty},I^{(1,o)}_{\infty})$ is globally asymptotically stable when $\mathcal{R}_o=\beta/\gamma >1$, whereas $(S^{(2,o)}_{\infty},I^{(2,o)}_{\infty})$ is globally asymptotically stable when $\mathcal{R}_o=\beta/\gamma<1$.

We assume that the cost function has the following quadratic form
$$
C(s,i,u) =\frac{1}{2} \left(a {i}^2 +b (us)^2 \right), \ \ \ a,b>0.
$$
The latter can be interpreted as a second-order Taylor approximation of any smooth, convex, separable cost function with global minimum in $(0,0)$.
The parameter $a$ can be taken, for instance, as $a=\bar{\iota}\,^{{-1}}$, where $\bar{\iota}\in (0,1)$  represents the maximal percentage
of infected people that the health-care system can handle; on the other hand, $b$ represents the sensitivity of the policy maker with respect to the vaccination costs. Under this specification of the cost function, for each $(s,i) \in \mathcal M$, we have with the notation of Proposition \ref{prop:closedloop},
\begin{eqnarray*}
\widetilde{U}^\star(s,i)&=& 
\left\{\begin{array}{lll}
0 && \textrm{if } \partial^{+}_{s}V(s,i)\le 0,\\\\ 
\frac{\partial^{+}_{s}V(s,i)}{bs} && \textrm{if }  0<\partial^{+}_{s}V(s,i) <b \overline{U}s,\\ \\
\overline{U} && \textrm{if } \partial^{+}_{s}V(s,i)  \ge b \overline{U}s.
\end{array}\right.
\end{eqnarray*}

Our numerical method is based on a recursion on the HJB equation \eqref{eq:HJB}. 
Precisely, starting from $v^{[0]} \equiv 0$, we use the recursive
algorithm:
$$
(r-\mathcal L)v^{[n+1]} = C^{\star}(s,i,v^{[n]}_{s}), \quad n \ge 0.
$$
Those equations are then solved using the representation formula
$$
v^{[n+1]}(s,i)=\int_0^\infty e^{-rt} C^\star\big(S_t^{s,i},I_t^{s,i},v^{[n]}_{s}(S_t^{s,i}, I_t^{s,i})\big) dt, \quad (s,i) \in \mathcal M.  
$$
The evaluation of the needed derivatives is performed through a finite difference scheme, which is of backward or forward type for those points that lie at the boundary of the region $\mathcal{M}$.

Throughout this section, the unit of time will be the week. 

\subsection{Optimal vaccination vs.\ no vaccination}
\label{OptimalVsNo} 

We set the following values for the parameters. 
The transmission rate of the disease is $\beta=0.7$;
the average length of infection and reinfection are assumed to be equal to $21$ and $180$ days respectively, so that $\gamma =\frac{1}{3}$ and $\eta=\frac{7}{180}$;
we set $r=\frac{0.005}{52}$ (i.e.\ a yearly discount rate of $5\%$) and $a=0.08$ and $b=0.016$.
We fix $\overline{U}=\frac{7}{120}$. Since $e^{-52\overline{U}}\approx e^{-3}$, one observes that the health-care system vaccinating at the maximal rate $\overline{U}$ is able to vaccinate about $95\%$ of the total  population in $1$ year. 
Finally, we initialize the system as
$$
S(0)=75\%, \ \ \ I(0)=20\%.
$$

Figure \ref{fig1} provides a comparison between the optimal vaccination policy and the no-vaccination policy. 
\begin{figure}[htb]
\begin{center}
\begin{tabular}{cccc}
\includegraphics[height=4cm]{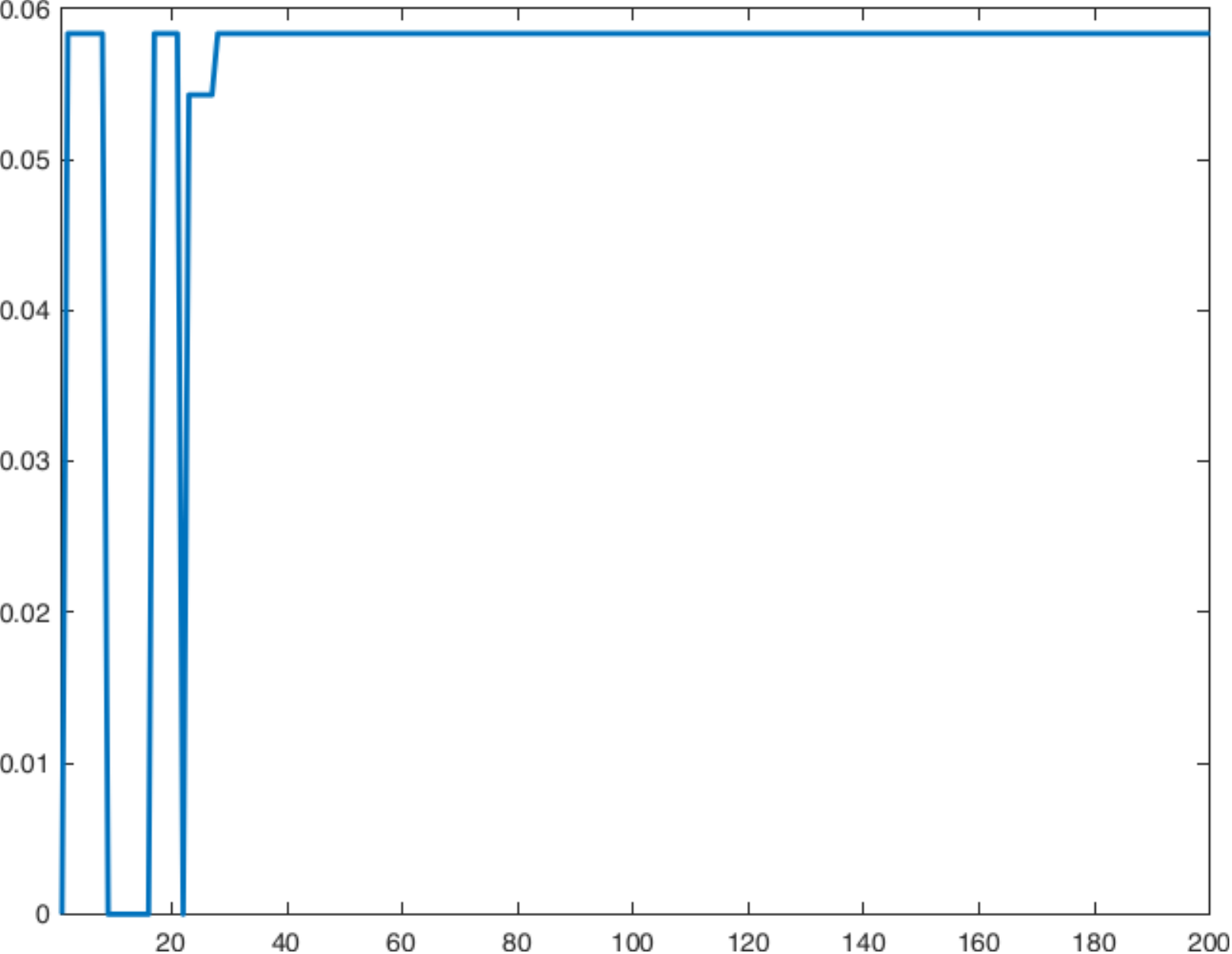} &
 \includegraphics[height=4cm]{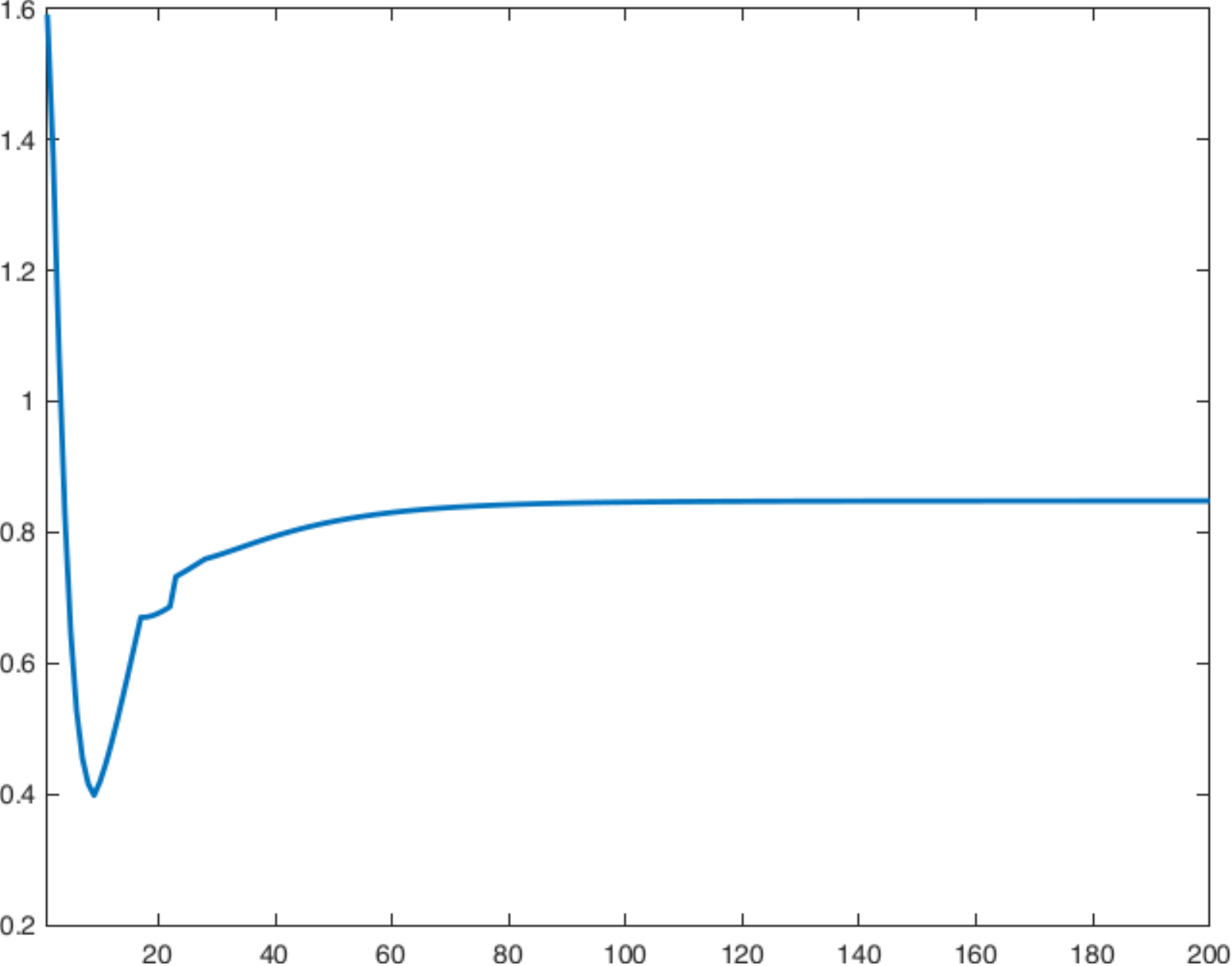} &  
 \includegraphics[height=4cm]{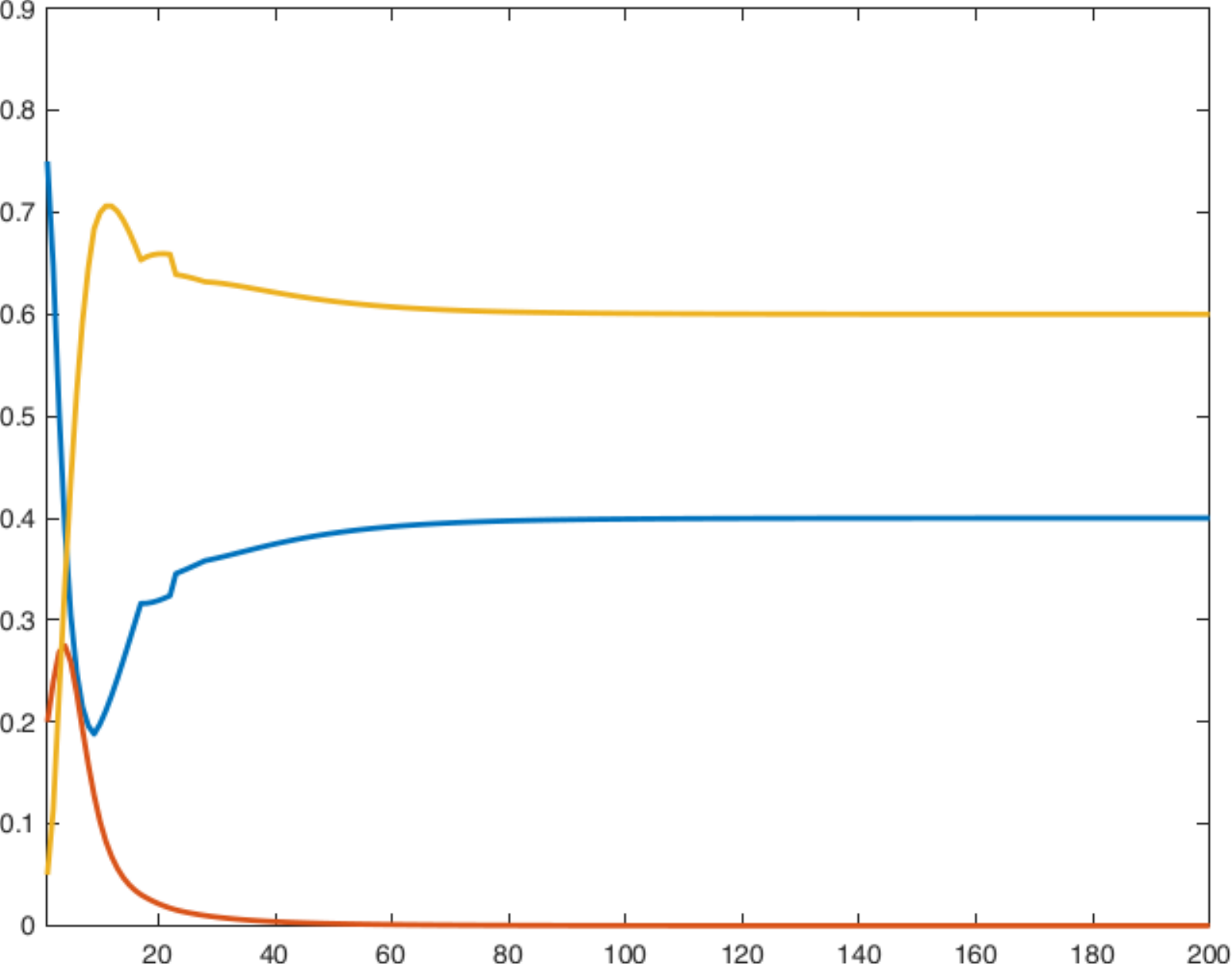}  
\\ \\
\includegraphics[height=4cm]{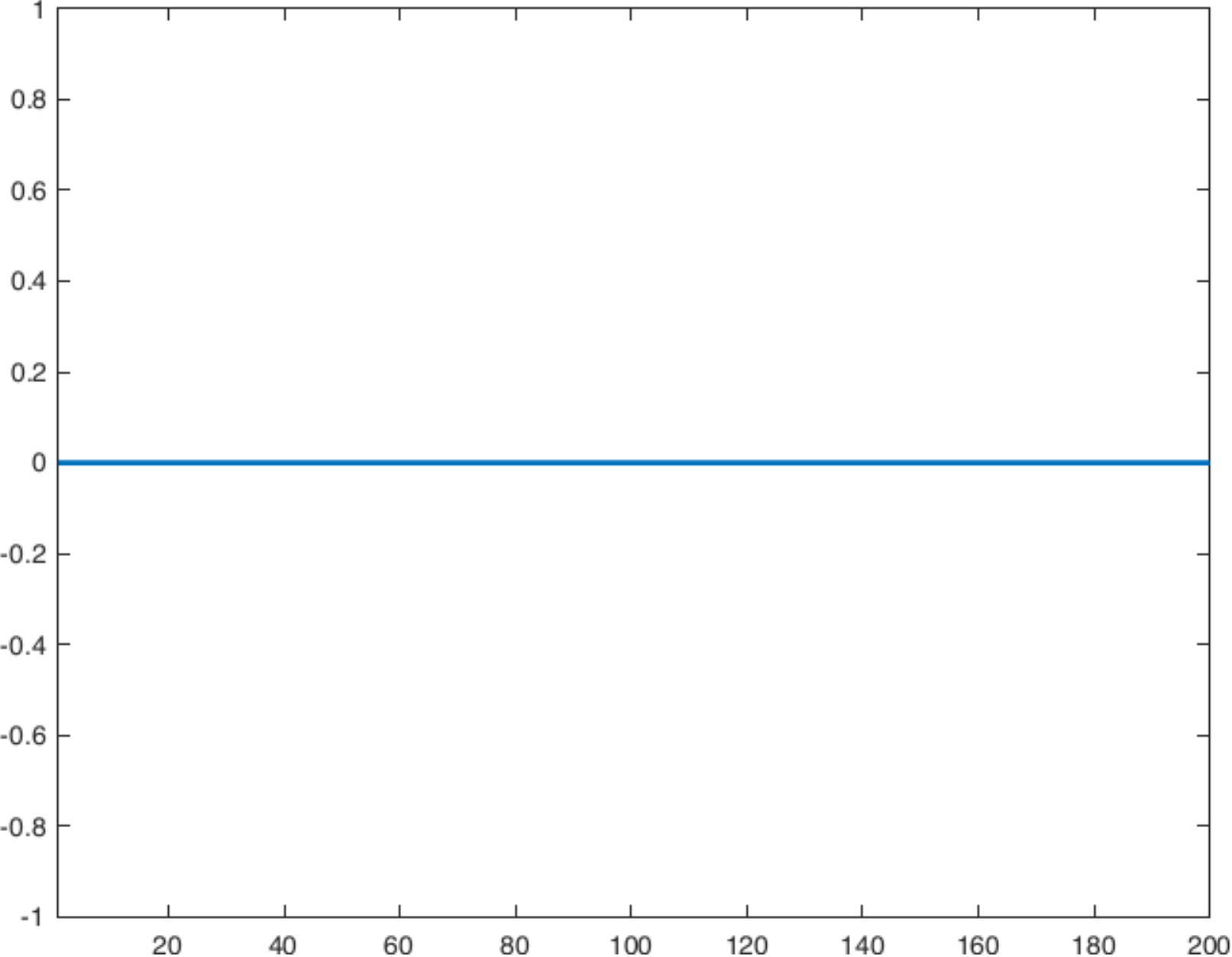}& 
\includegraphics[height=4cm]{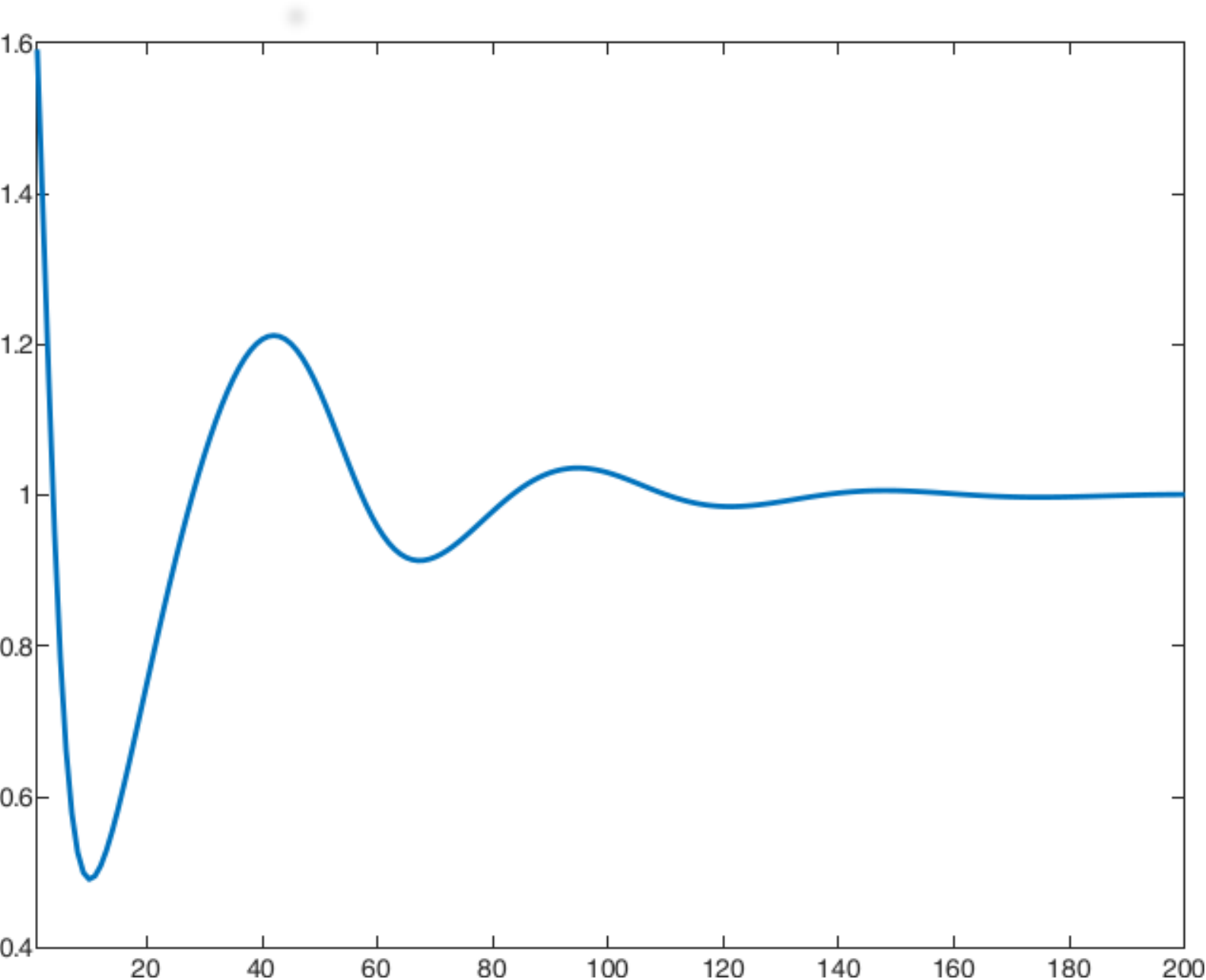} &
\includegraphics[height=4cm]{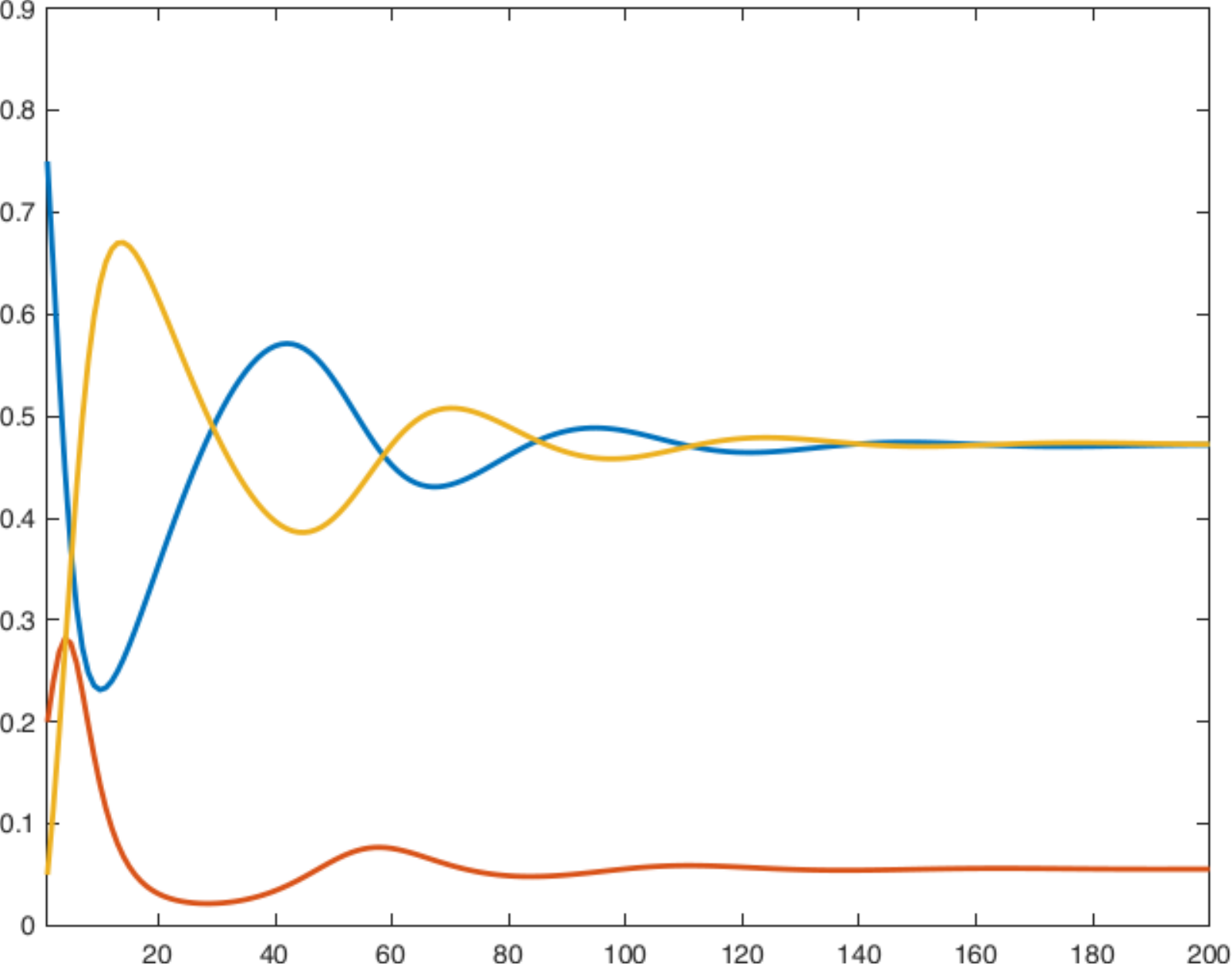}\\
\end{tabular}
\caption{Comparison between the optimal social planner vaccination policy (upper panel) and the case of no vaccination (lower panel).
The figures in the first column show the evolution of the vaccination policy through the optimal control $u_t$; the ones in the second column show  
the evolution of the instantaneous reproduction number $\mathcal{R}_t=\frac{\beta}{\gamma}S(t)$; 
the ones in the third column show evolution of the percentage of susceptible (in blue), infected (in red) and recovered (in green) individuals.}
\label{fig1}
\end{center}
\end{figure}
As outcome, we see that in both cases the system oscillates in a first phase and then converges towards an equilibrium. 

Our numerical simulations suggest that the optimal policy $u^{\star}$ always converges towards a limit value 
 $u^\star_{\infty}$. Assuming that such an equilibrium $u^\star_{\infty}$ indeed exists, 
 we see that the optimally controlled system has again two long-run equilibria  (cf.\ \eqref{equilibriano}):
$$
 (S_{\infty}^{(1)},I_{\infty}^{(1)})=\left(\frac{\gamma}{\beta}, \frac{\eta}{\eta+\gamma}\left(1-\frac{\gamma}{\beta}\right)-\frac{\gamma}{\eta+\gamma}\frac{u_{\infty}^\star}{\beta} \right)
 \quad \textrm{ e } \quad 
 (S_{\infty}^{(2)},I_{\infty}^{(2)})=\left(\frac{\eta}{\eta+u_{\infty}^\star},0\right),
 $$
whenever $I_{\infty}^{(1)}>0$. Repeating the arguments in \cite{ORegan-etal} (easily adjusted to our setting), it can be proved that $(S_{\infty}^{(1)},I_{\infty}^{(1)})$ is globally asymptotically stable when $u_{\infty}^\star < \gamma$.

We can observe that in the case of no vaccination the disease becomes endogenous, whereas the optimal vaccination is able in the long-run to keep the number of infected to zero. 
More precisely, in the case of no vaccination, the long-run percentage of susceptible and infected individuals, $47\%$ and $6\%$, respectively, is achieved in about $3$ years. On the contrast, if the policy maker adopts the optimal vaccination policy, the vaccination campaign starts with maximum intensity $\overline{U}$ and then it fluctuates for a period of about $28$ weeks. After that, it stabilizes at the value $\overline{U}$. 
In this case, the equilibrium point $(S_{\infty}^{(2)},I_{\infty}^{(2)})$ is approached counting about $40\%$ of susceptible individuals and almost no infected in less than $1$ year. Still, to avoid a new outbreak of the disease, the vaccination policy $\overline{U}$ 
must be kept to move individuals from the class $S$ to the class $R$.

\subsection{Variation of the parameter $\eta$}
\label{eta}
In this subsection we study how the optimal vaccination rate, the optimal reproduction number $\mathcal{R}_t$ and the optimally controlled dynamics of susceptible, infected and recovered depend on the reinfection rate $\eta$. We assume that the average period of reinfection is equal to $60$ or $360$ days, so that either $\eta =\frac{7}{360}$ or $\eta =\frac{7}{60}$, respectively. 
All the other parameters are instead kept fixed to the values assumed in Section \ref{OptimalVsNo}. 

As expected, from Figure \ref{fig4} we observe that the optimal vaccination rate increases when increasing $\eta$. However, the more vigorous optimal vaccination rate employed when $\eta =\frac{7}{60}$ is not such to let infections to zero. As a matter of fact, the lower row of Figure \ref{fig4} shows that the number of infected stabilizes asymptotically around the level $0.05$. On the other hand, the infected population disappears in the long run through a weaker vaccination policy when the reinfection average period is of 1 year circa.

\begin{figure}[htb]
\begin{center}
\begin{tabular}{cccc}
\includegraphics[height=4cm]{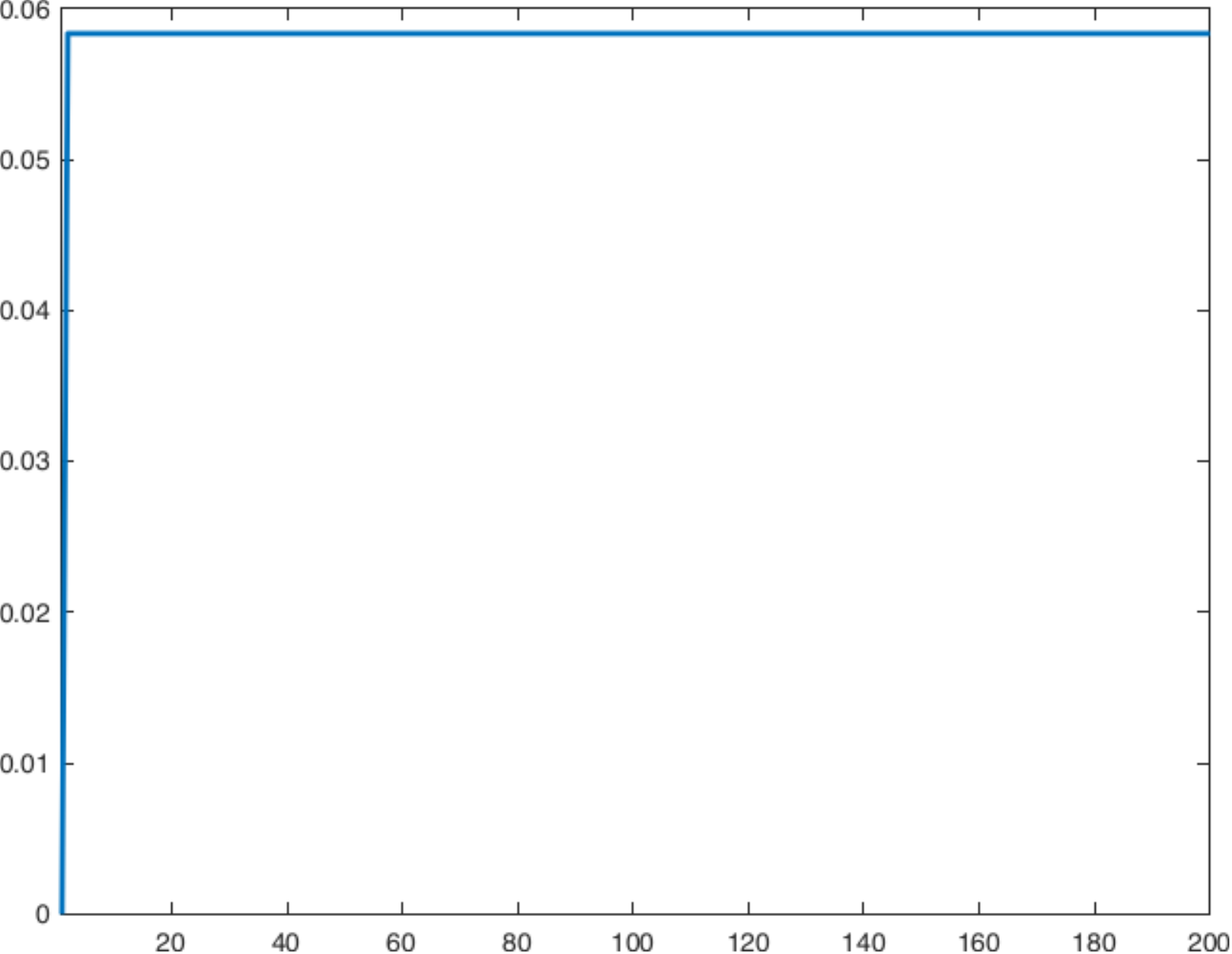} &
 \includegraphics[height=4cm]{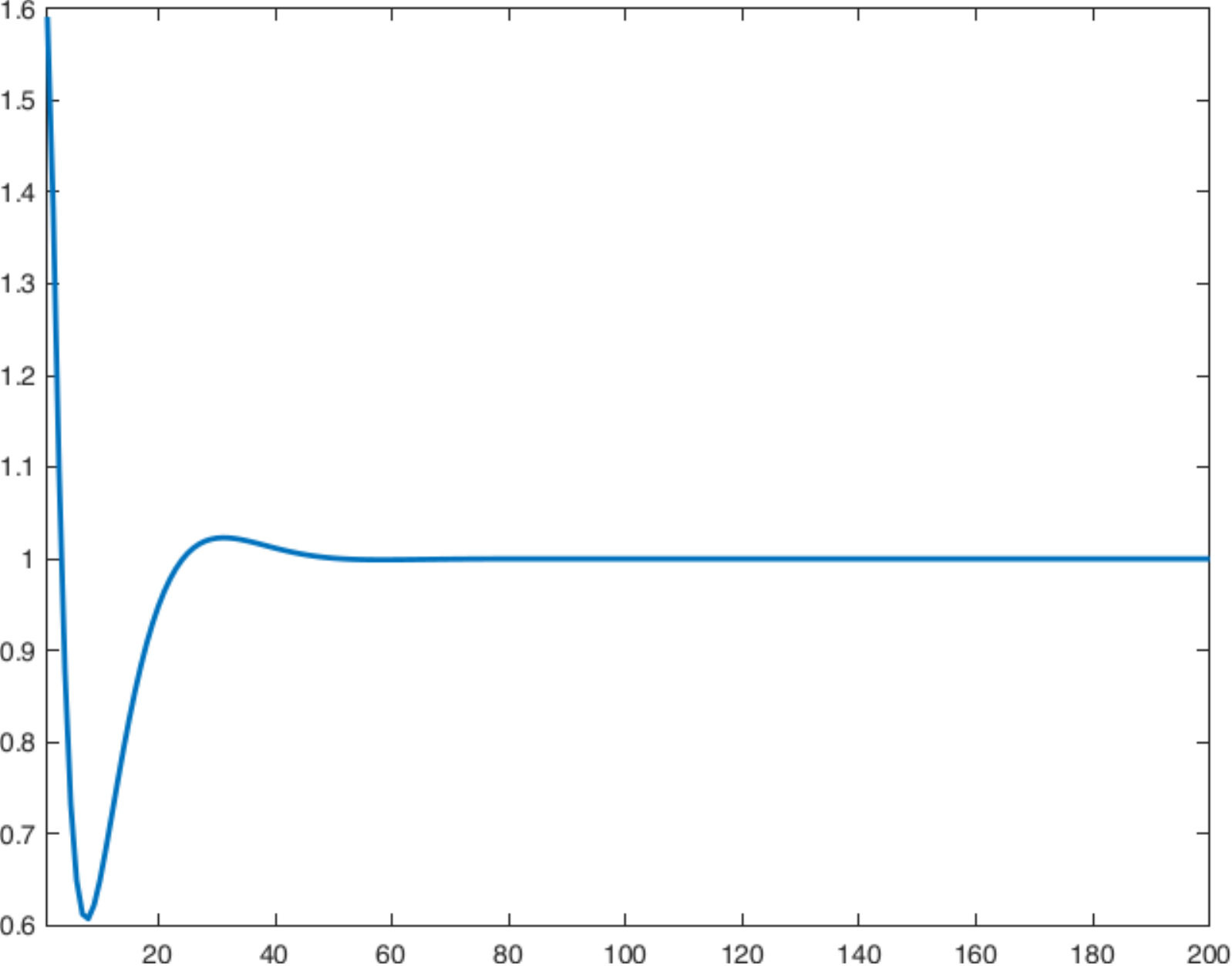} &  
 \includegraphics[height=4cm]{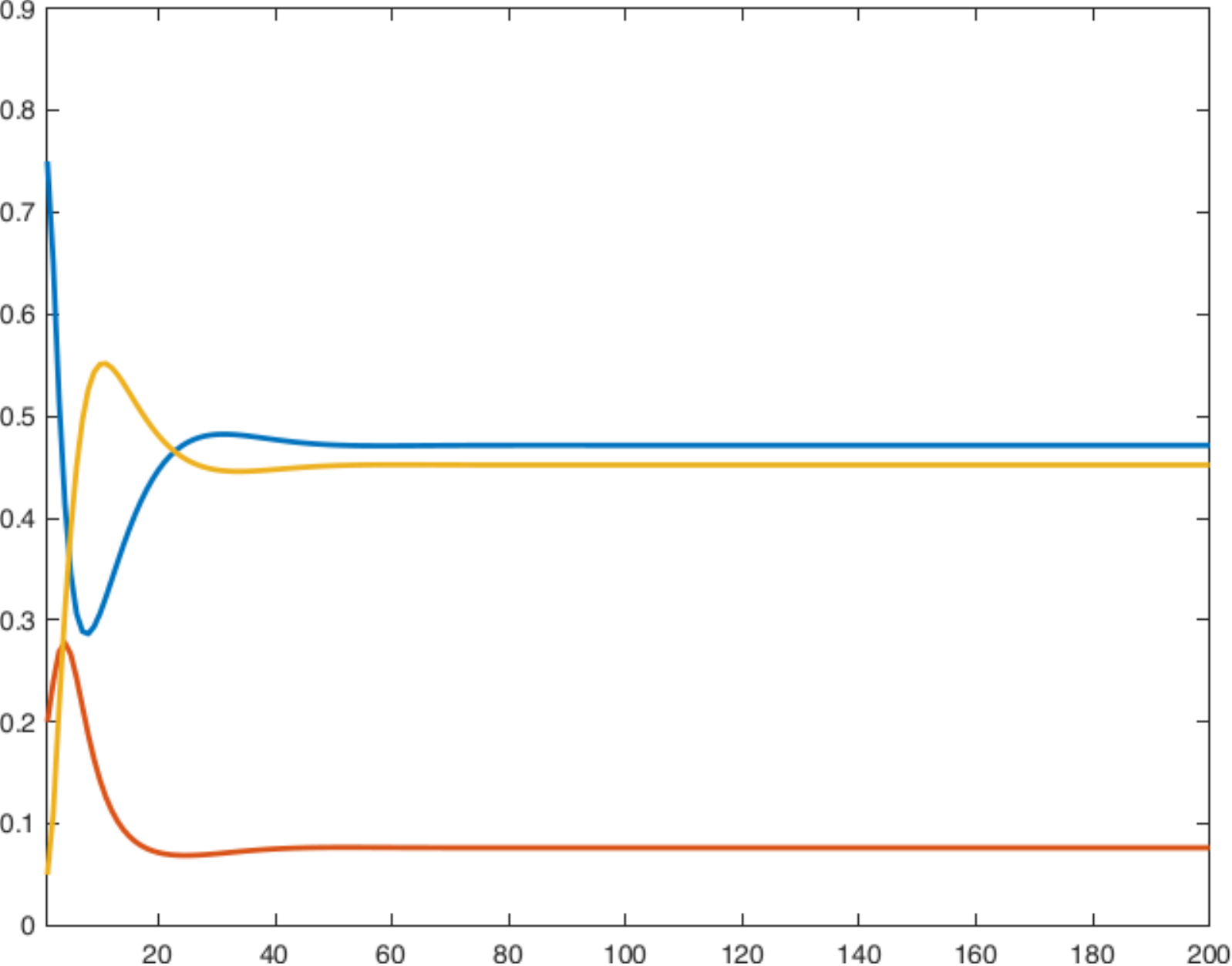} 
 \\ \\
\includegraphics[height=4cm]{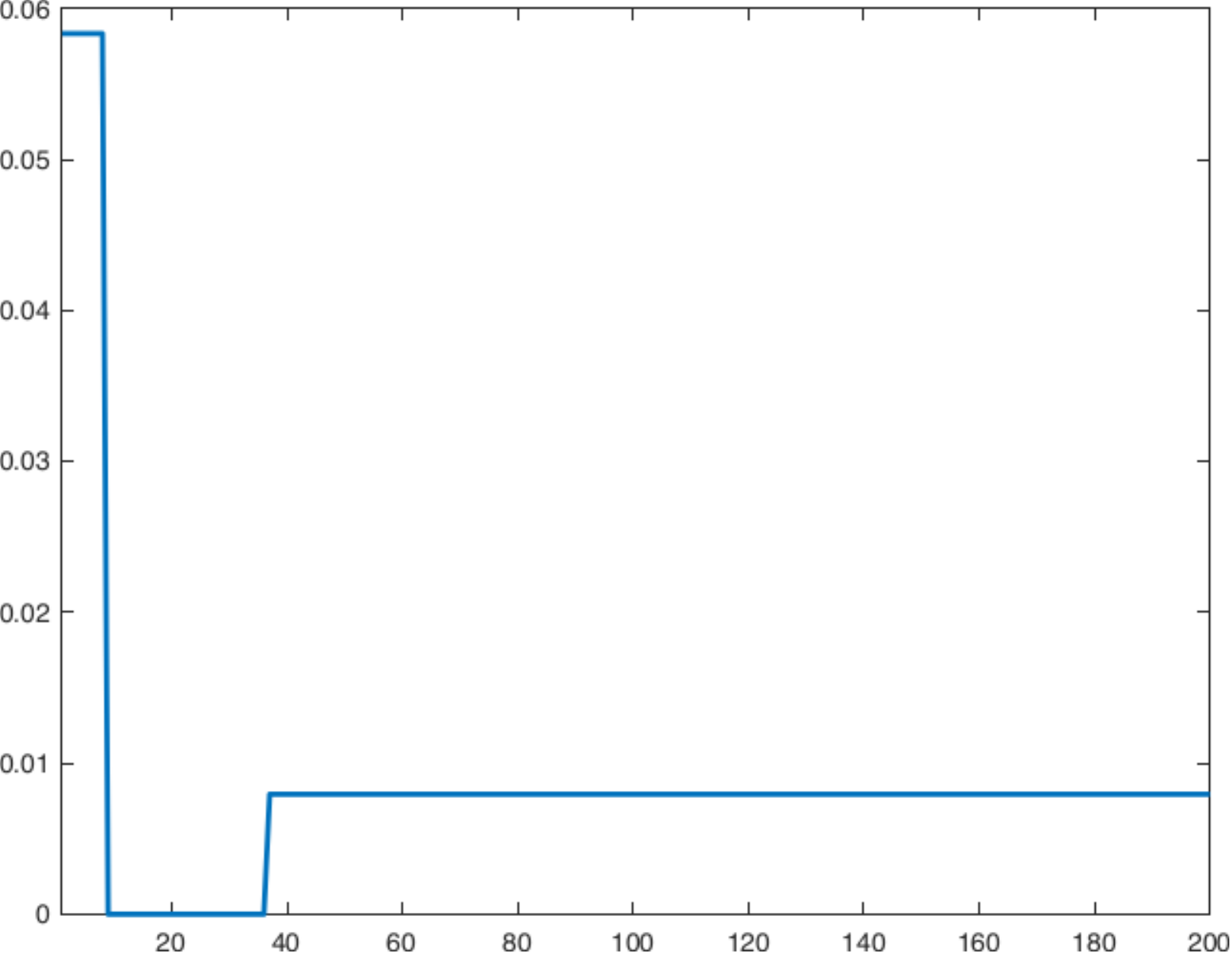} &
 \includegraphics[height=4cm]{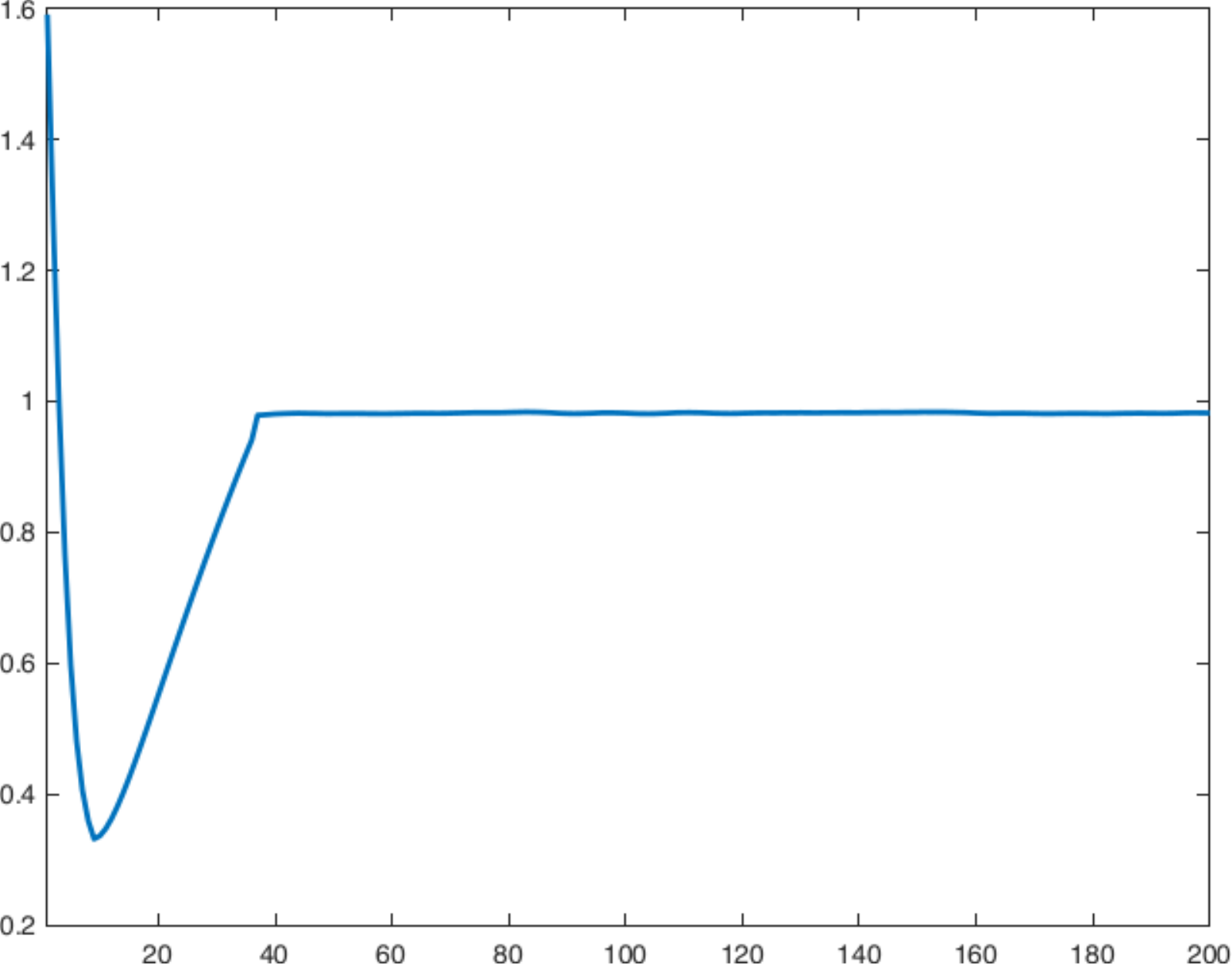} &  
 \includegraphics[height=4cm]{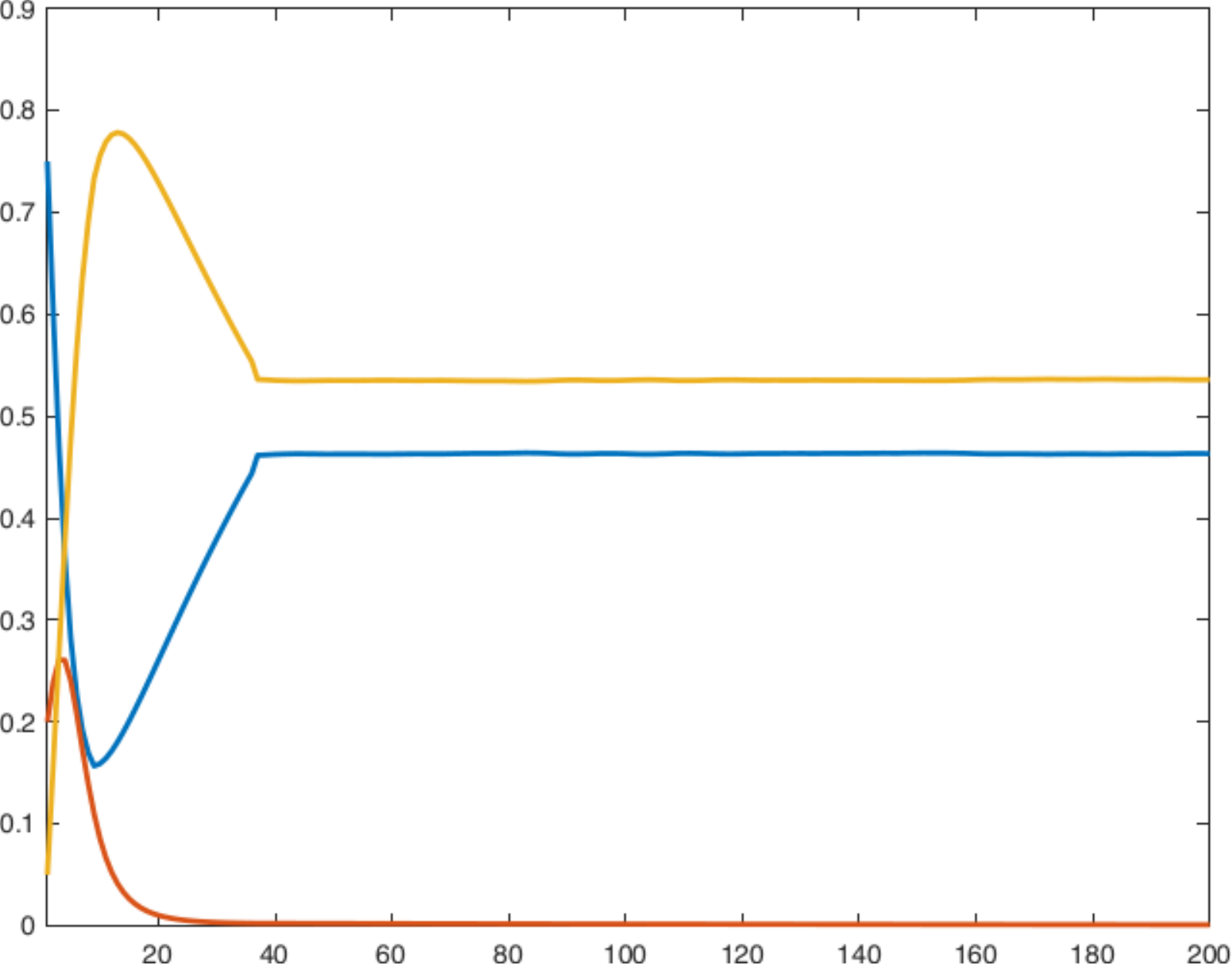} 
\end{tabular}
\caption{Comparison between the optimal social planner vaccination policy when the average length of reinfection is $60$ days (first row) or $360$ days (second row).
The figures in the first column show the evolution of the vaccination policy through the optimal control $u_t$; the ones in the second column show  
the evolution of the instantaneous reproduction number $\mathcal{R}_t=\frac{\beta}{\gamma}S(t)$; 
the ones in the third column show the evolution of the percentage of susceptible (in blue), infected (in red) and recovered (in green) individuals.}
\label{fig4}
\end{center}
\end{figure}

\subsection{Variation of the ratio $\frac{\beta}{\gamma}$}
\label{R0}

In this section we consider strategies corresponding to different values of the natural reproduction number $\mathcal{R}_o=\frac{\beta}{\gamma}$; precisely, we take $\frac{\beta}{\gamma}=3$ and  $\frac{\beta}{\gamma}=\frac{3}{2}$. All the other parameters are instead kept fixed to the values assumed in Section \ref{OptimalVsNo}. 
A comparison of the optimal social planner vaccination policy is shown in Figure \ref{fig6}.

\begin{figure}[htb]
\begin{center}
\begin{tabular}{cccc}
\includegraphics[height=4cm]{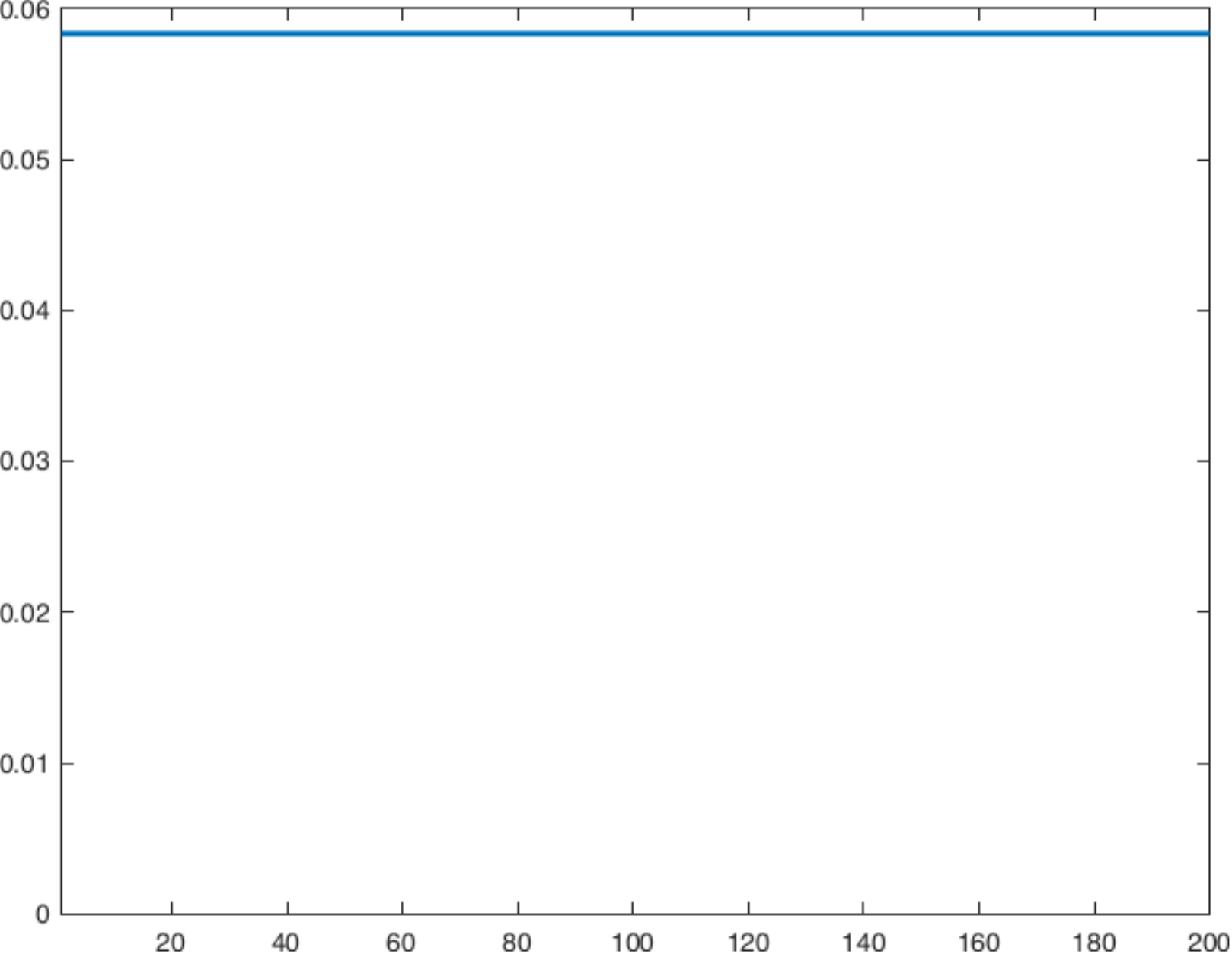} &
 \includegraphics[height=4cm]{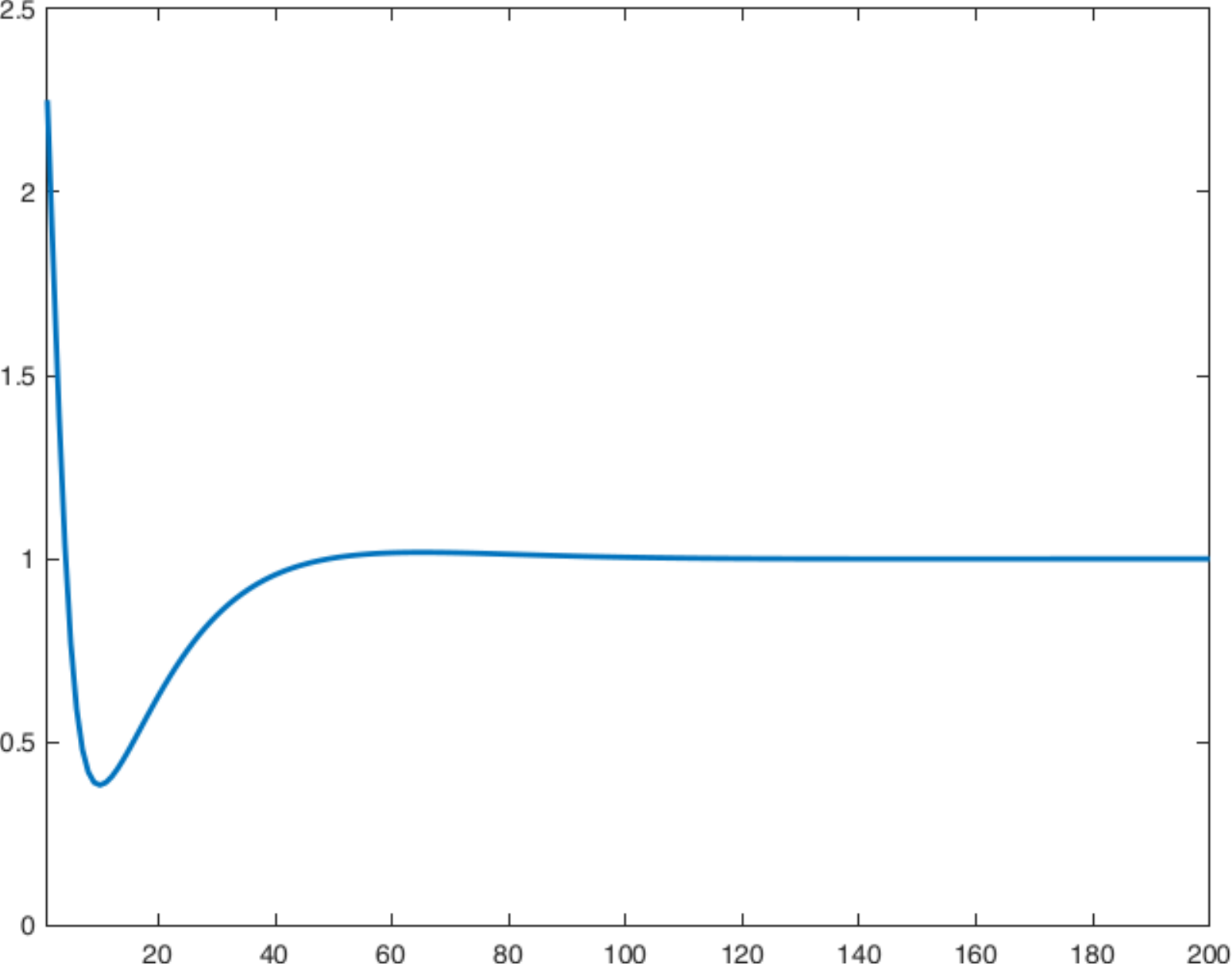} &  
 \includegraphics[height=4cm]{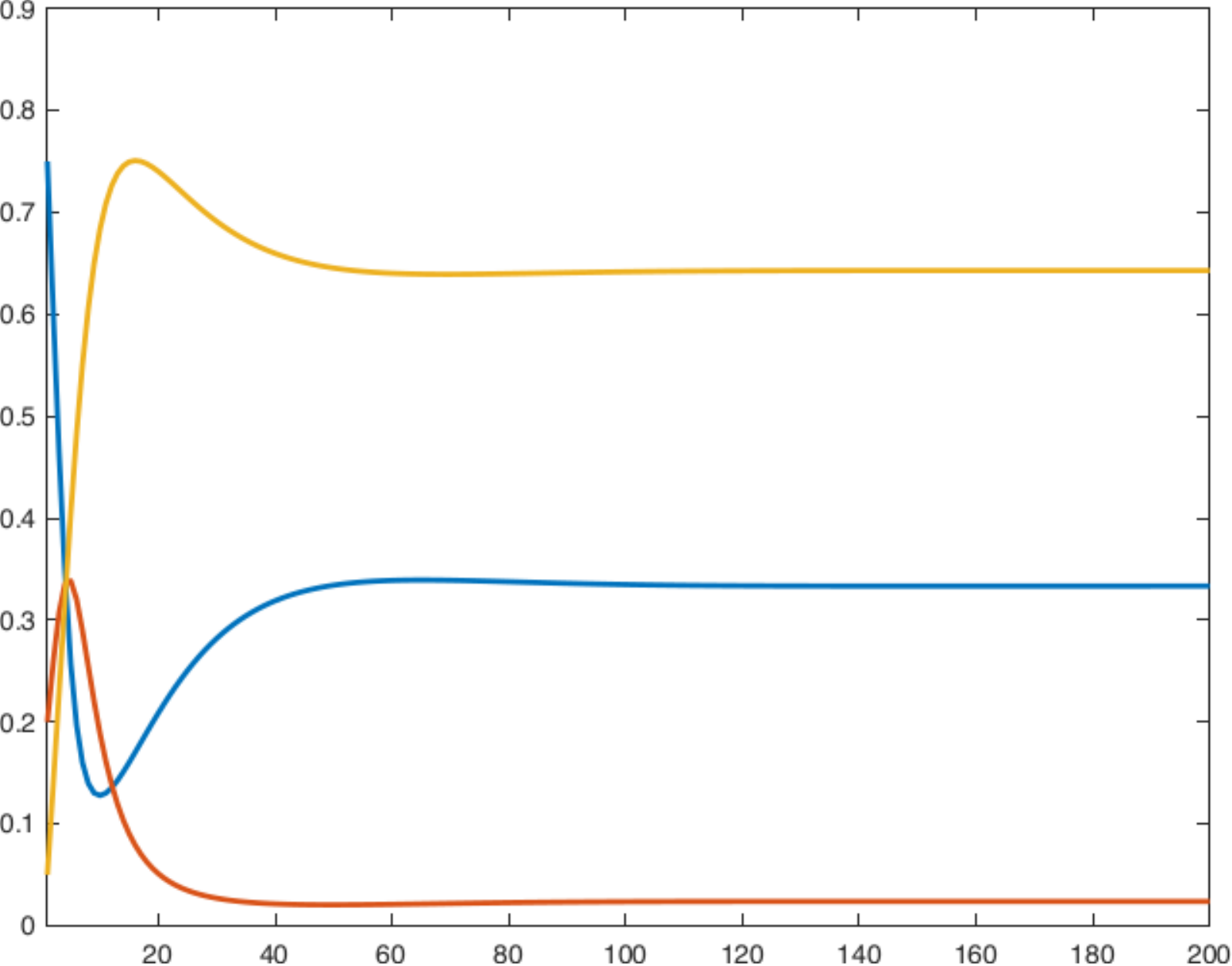}  
\\ \\
\includegraphics[height=4cm]{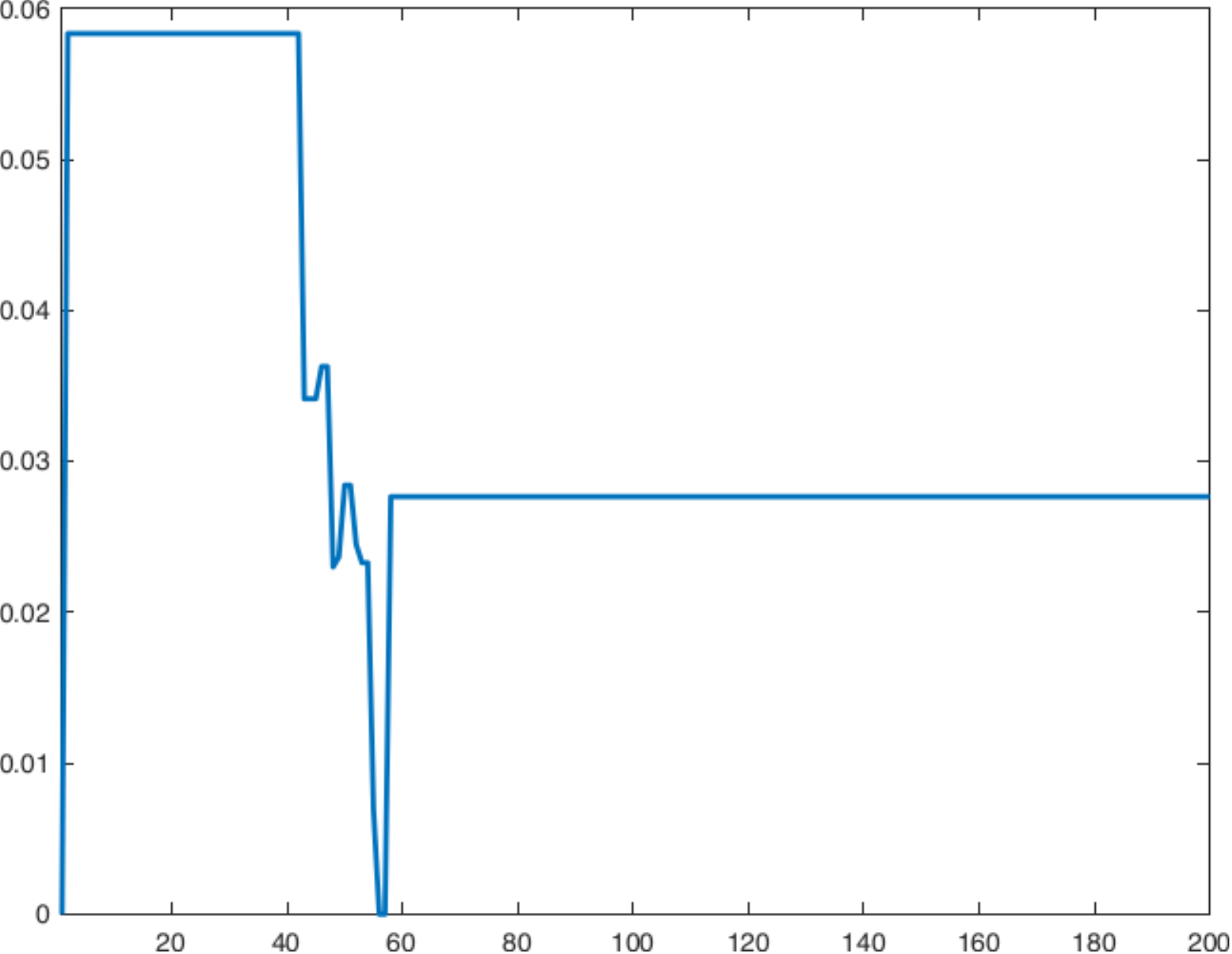} &
 \includegraphics[height=4cm]{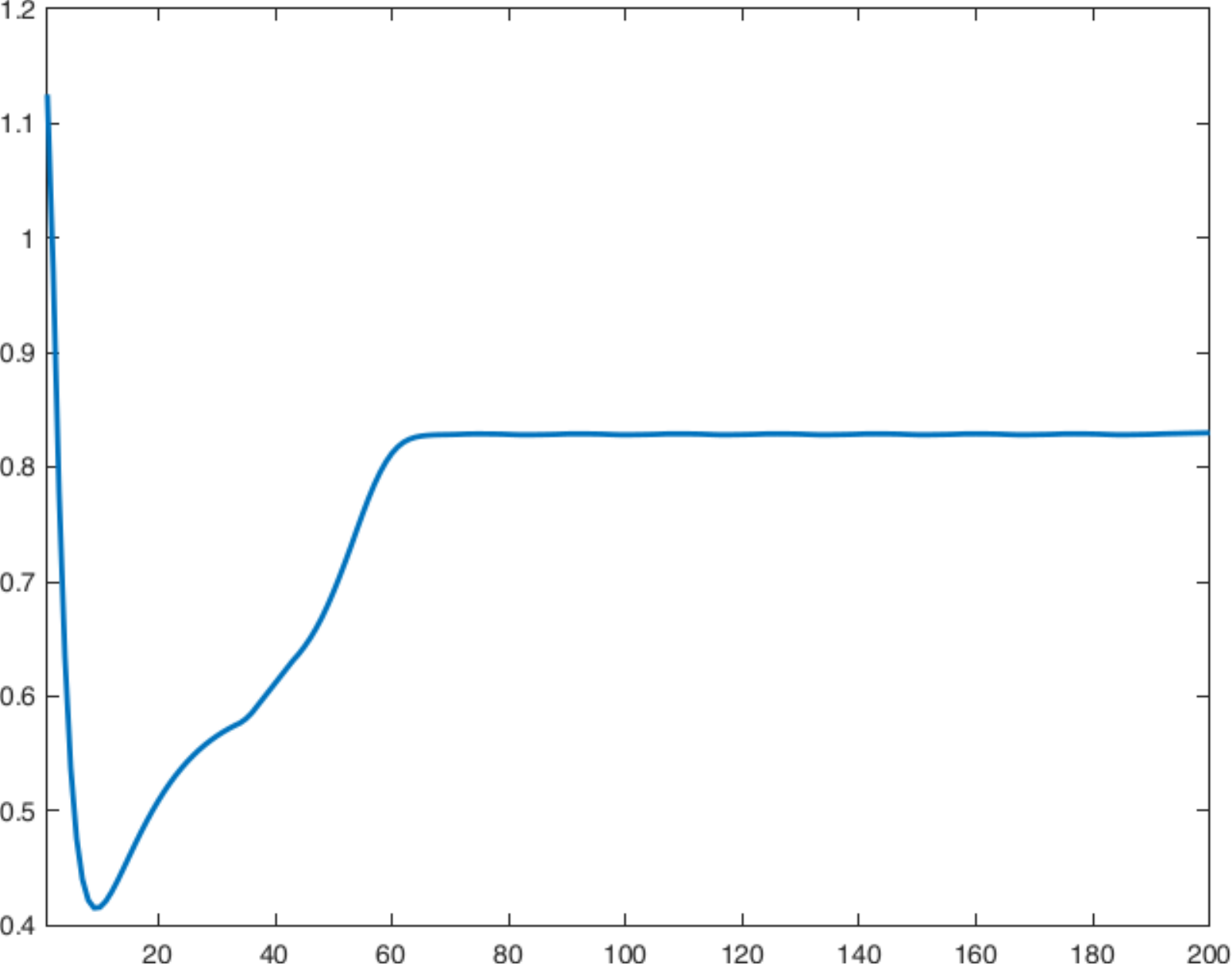} &  
 \includegraphics[height=4cm]{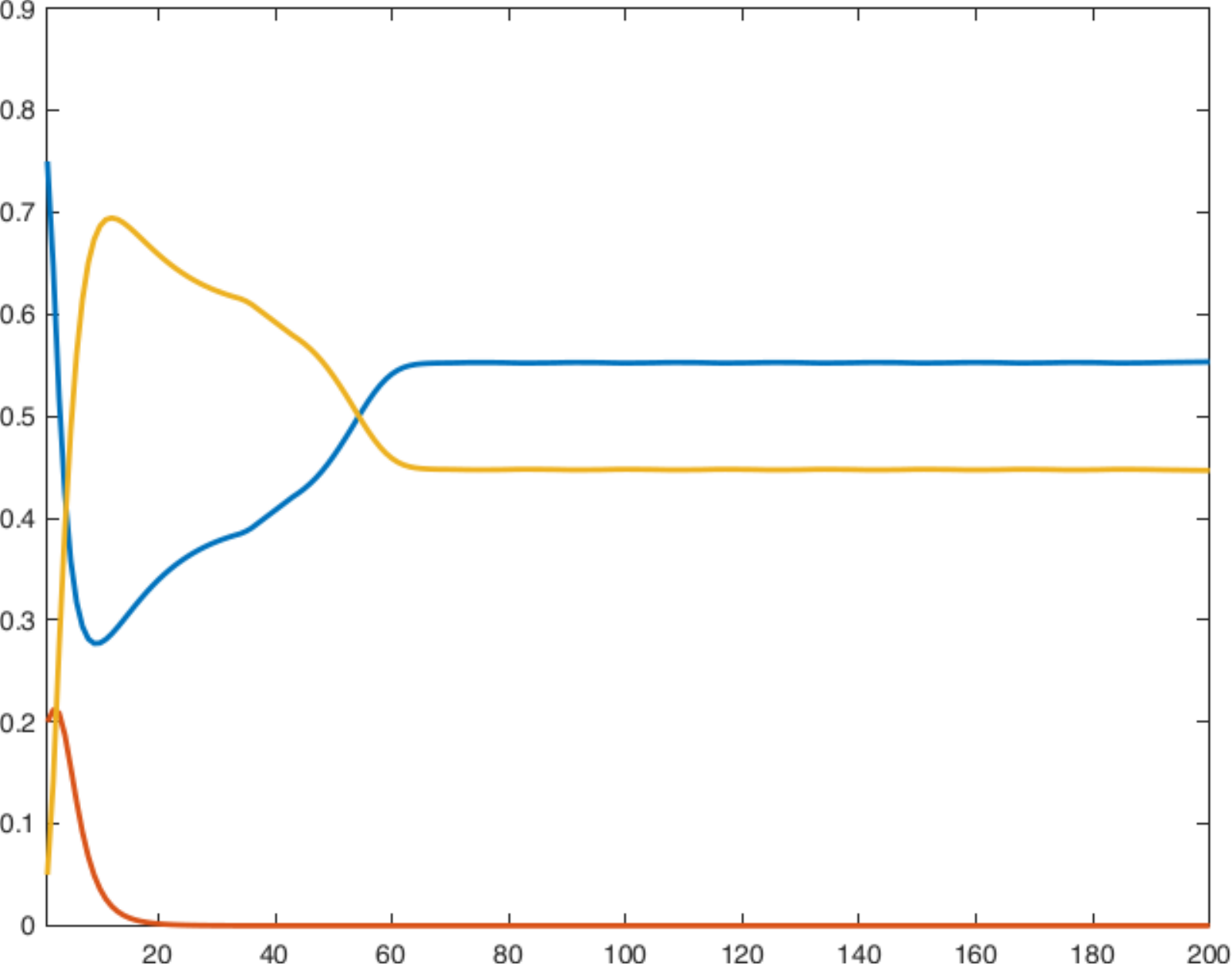}  
\end{tabular}
\caption{Comparison between the optimal social planner vaccination policy when $\mathcal{R}_o=3$ (first row) or $\mathcal{R}_o=3/2$ (second row).
The figures in the first column show the evolution of the vaccination policy through the optimal control $u_t$; the ones in the second column show the evolution of the instantaneous reproduction number $\mathcal{R}_t=\frac{\beta}{\gamma}S(t)$; 
the ones in the third column show the evolution of the percentage of susceptible (in blue), infected (in red) and recovered (in green) individuals.}
\label{fig6}
\end{center}
\end{figure}

It is interesting to notice that the two choices of $\mathcal{R}_o$ are such that the dynamical system of susceptible and infected stabilizes around the equilibria in which the disease becomes endogenous or achieves zero infections. 
As a matter of fact, when $\mathcal{R}_o={3}$, the optimal strategy is constantly equal to $\overline{U}=\frac{7}{120}$ and the dynamical system converges to the equilibrium point $(S_{\infty}^{(1)},I_{\infty}^{(1)})\approx(0.33,0.11)$. 
On the other hand, if $\mathcal{R}_o=\frac{3}{2}$, the optimal strategy stabilizes around to the value $u^\star_{\infty}\approx 0.028$ and the dynamical system converges to the equilibrium point $(S_{\infty}^{(2)},I_{\infty}^{(2)})\approx (0.56, 0)$. Hence, a lower natural reproduction number $\mathcal{R}_o$ has the effect of making it possible to asymptotically keep the number of infected to zero through the optimal vaccination policy.

\subsection{Optimal vaccination policy for different values of initial infected individuals}
\label{differentI0}

In this section we study how the optimal vaccination policy reacts to different initial percentages of the infected population.
All the parameters are set as in Section \ref{OptimalVsNo}, $\frac{\beta}{\gamma}=1.5$, and the percentage $I(0)$ of the initial infected 
individuals is $1\%$, $5\%$ and $10\%$. The results of the numerical study are presented in Figure \ref{fig7}.

We observe that, in all the considered cases, the optimal vaccination rate asymptotically stabilizes around the equilibrium level $u_{\infty} \approx 0.028$, so that, under the optimal vaccination policy, $I_{\infty}=0$ after circa 20 weeks. In all the cases, the vaccination policy starts at the maximal rate, then the vaccination campaign is relaxed, and it is finally kept at the constant rate $u_{\infty} \approx 0.028$. However, the starting date of this final phase of constant vaccination rate is different: it is circa $15$ weeks when $I(0)=1\%$; it is circa $30$ weeks when $I(0)=5\%$; it is circa $54$ weeks if $I(0)=10\%$. That is, the social planner is allowed to stabilize the vaccination rate only after a first time period, whose length increases when the number of initial infected people increases.

\begin{figure}[htb]
\begin{center}
\begin{tabular}{cccc}
\includegraphics[height=4cm]{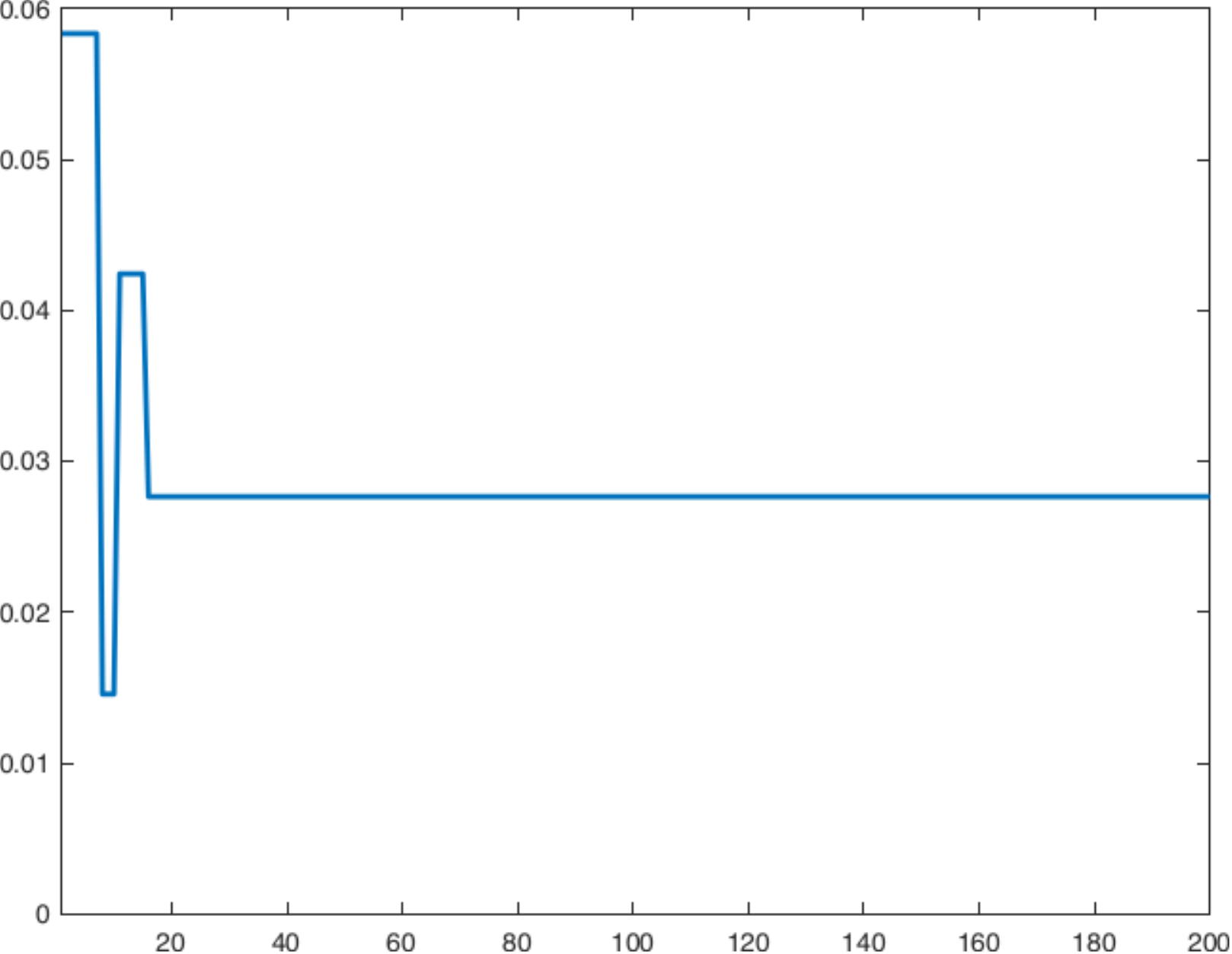} &
 \includegraphics[height=4cm]{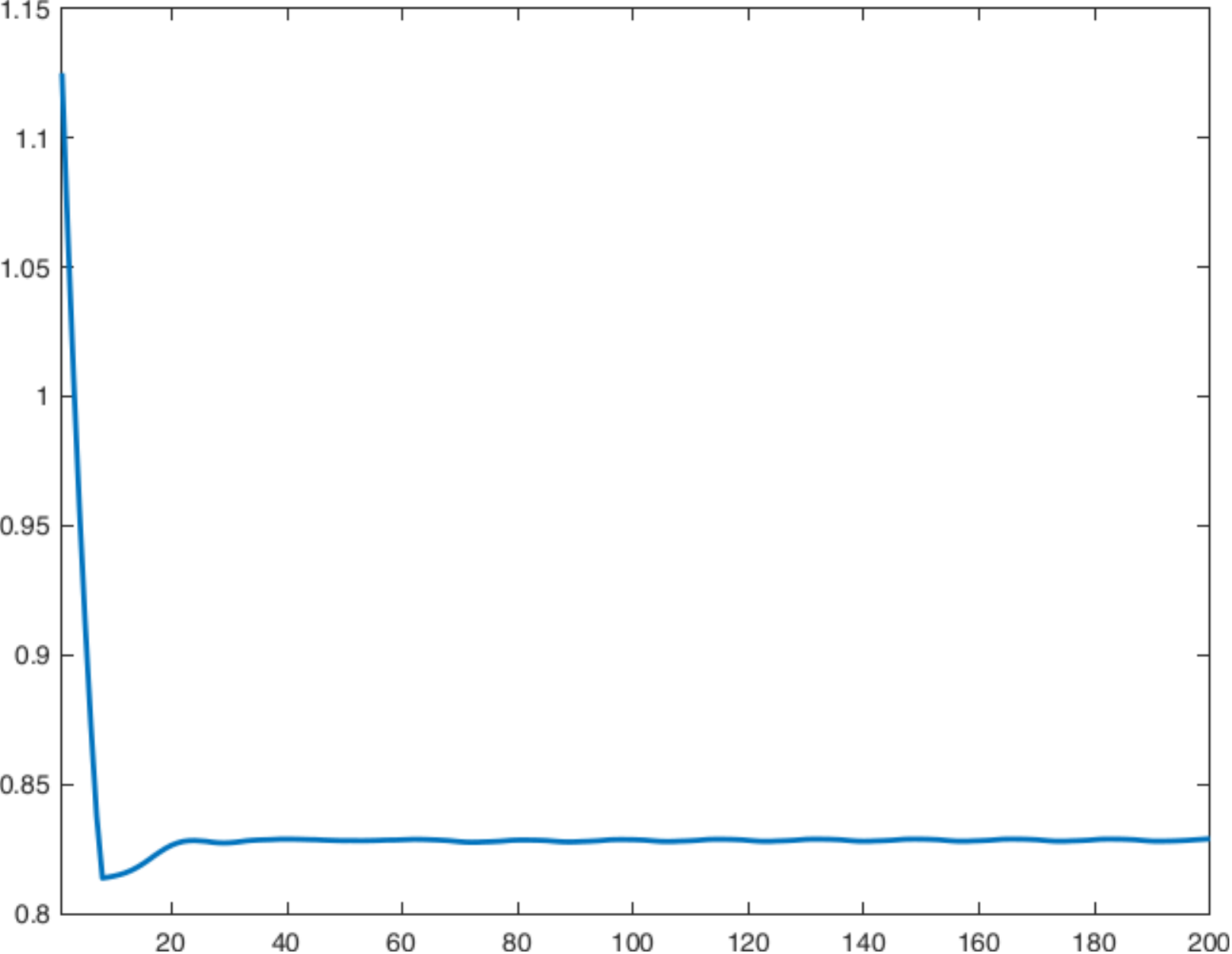} &  
 \includegraphics[height=4cm]{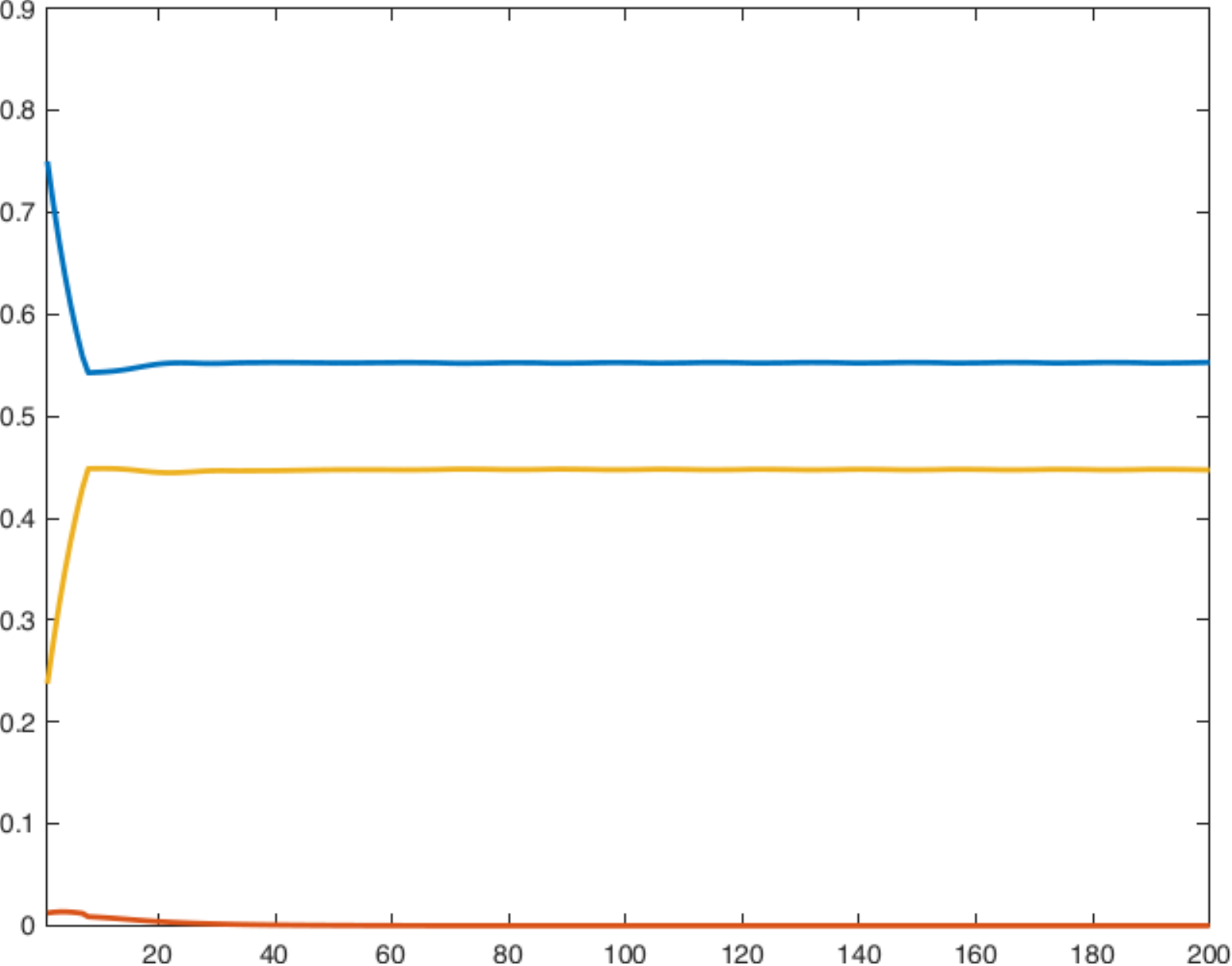}  
\\ \\
\includegraphics[height=4cm]{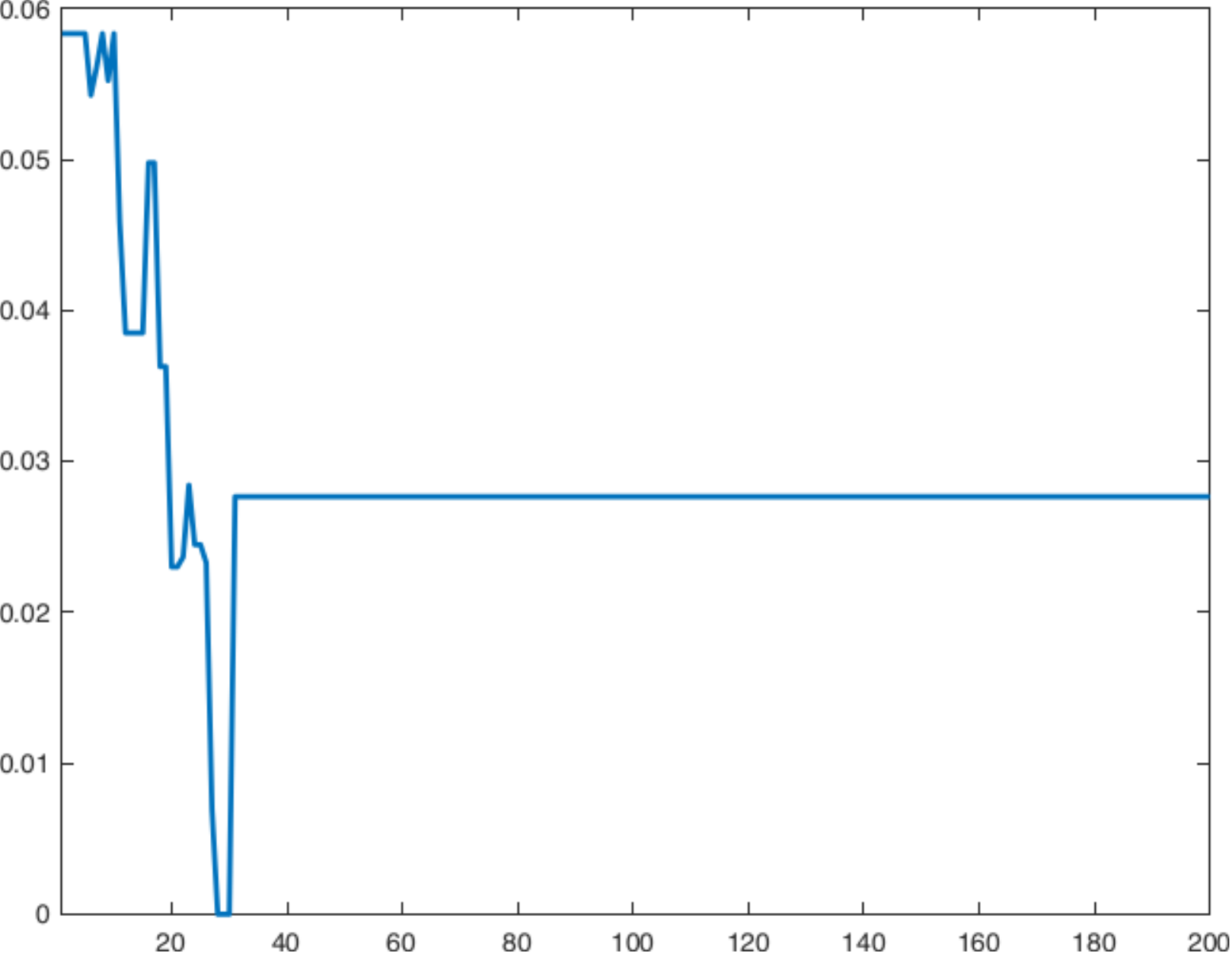} &
 \includegraphics[height=4cm]{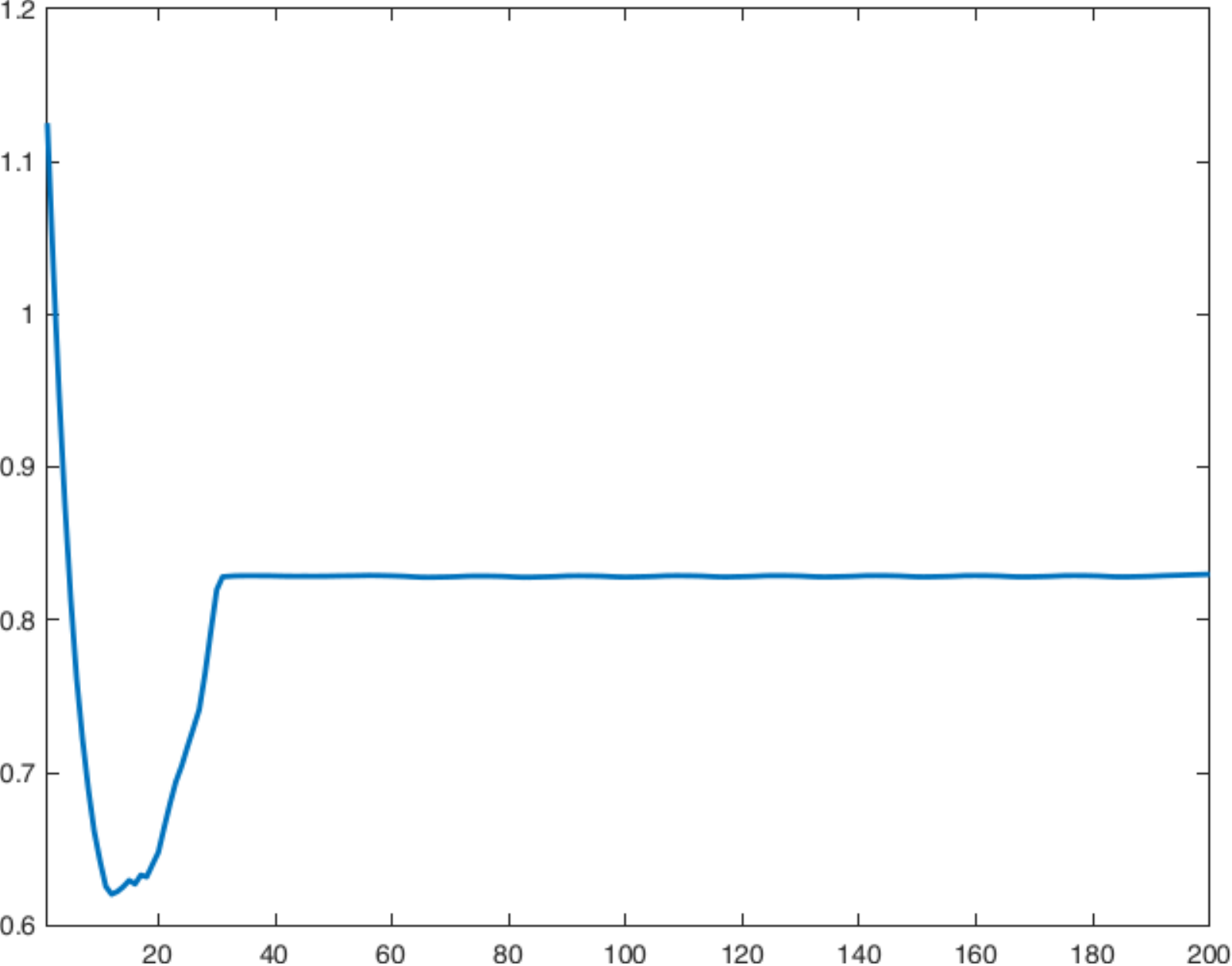} &  
 \includegraphics[height=4cm]{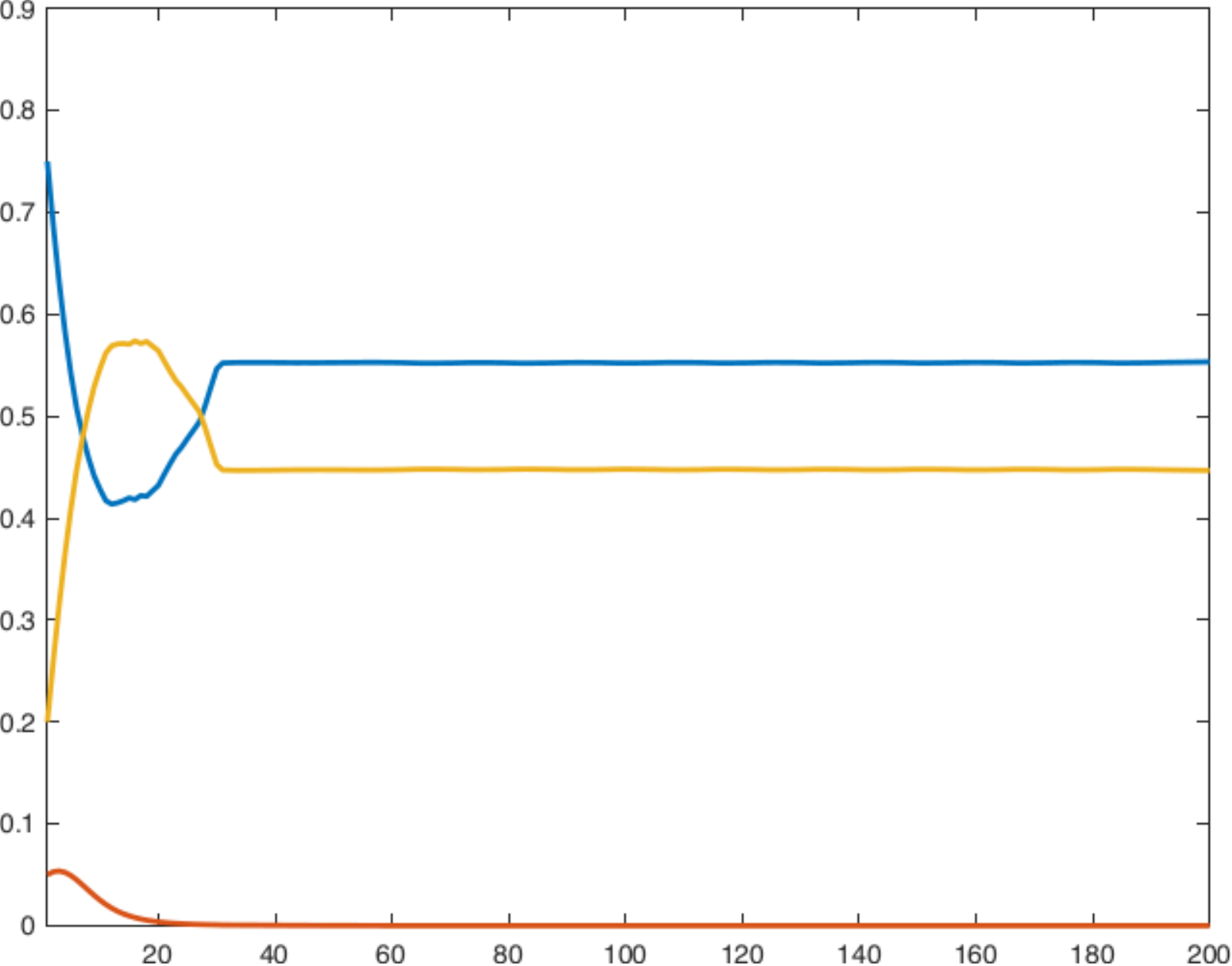}  
\\ \\
\includegraphics[height=4cm]{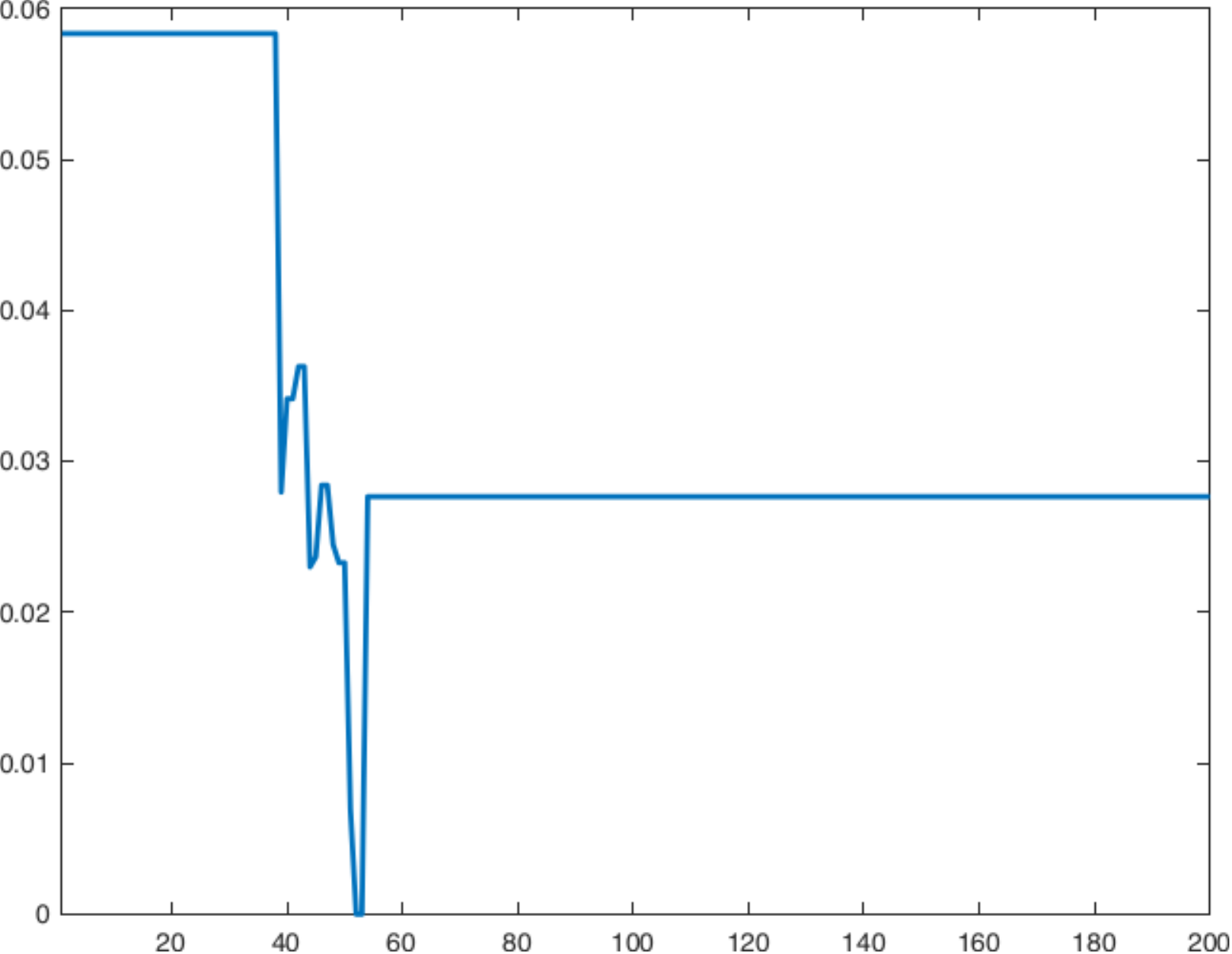} &
 \includegraphics[height=4cm]{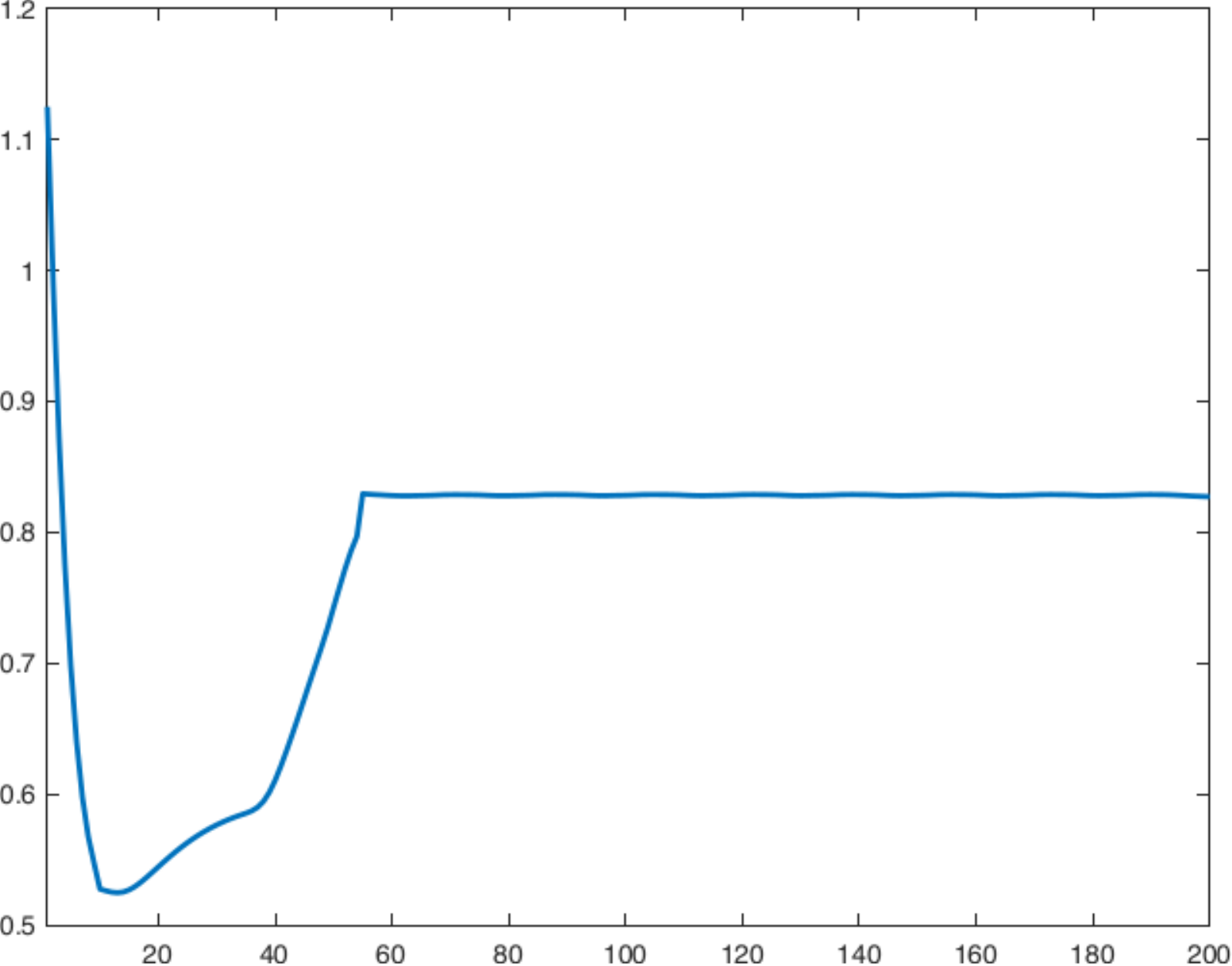} &  
 \includegraphics[height=4cm]{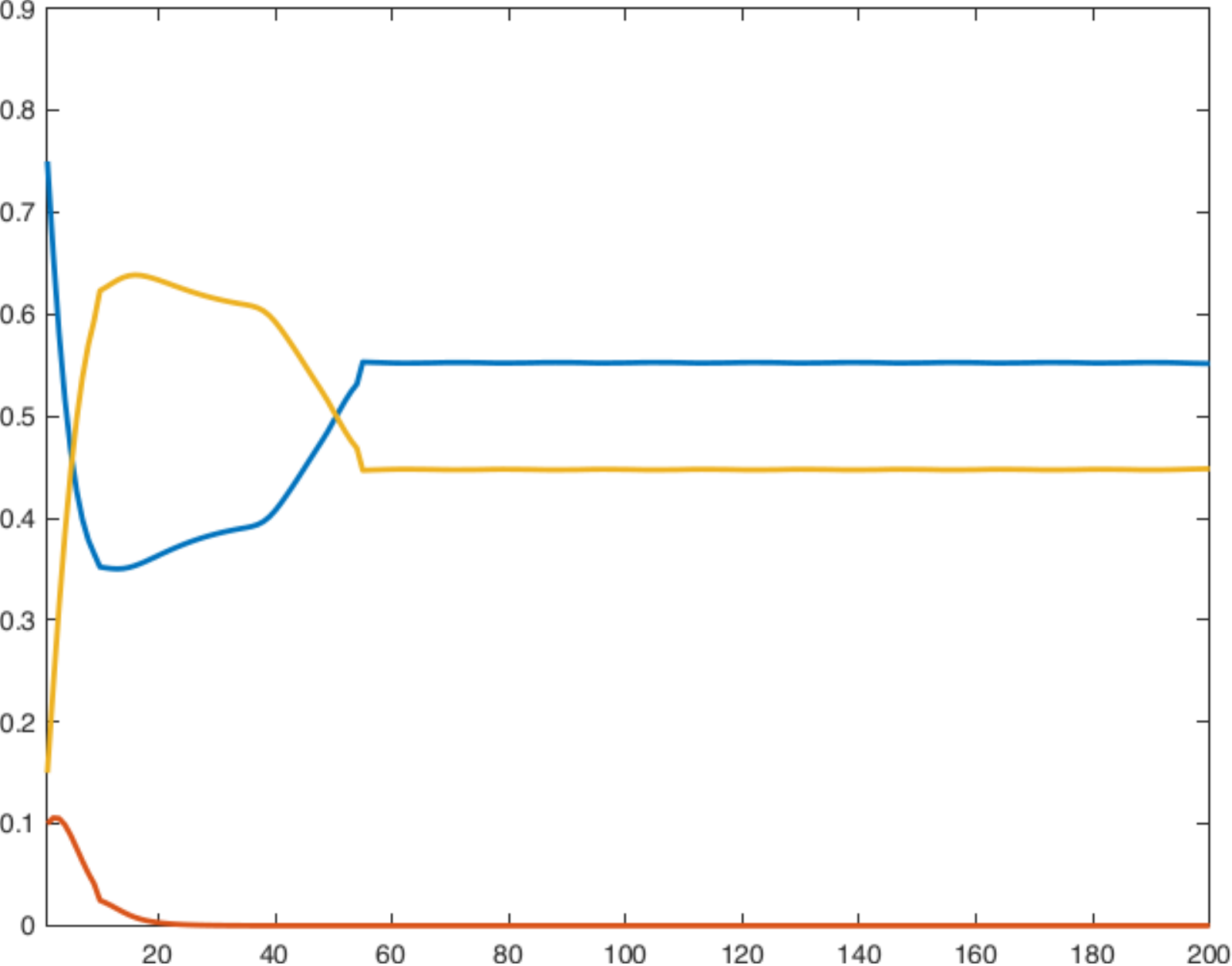}  
\end{tabular}
\caption{Comparison between the optimal social planner vaccination policy in the case the initial infected individuals are $1\%$ (first row), $5\%$ (second row) and
$10\%$ (third row).
The figures in the first column show the evolution of the vaccination policy through the value of the optimal control $u_t$; the ones in the second column show  
the evolution of the instantaneous reproduction number $\mathcal{R}_t=\frac{\beta}{\gamma}S(t)$; 
the ones in the third column show the evolution of the percentage of susceptible (in blue), infected (in red) and recovered (in green) individuals.}
\label{fig7}

\label{fig7}
\end{center}
\end{figure}


\section{Conclusions}
\label{sec:concl}

Within a SIR model with reinfection possibility, we have considered the problem of a social planner that aims at reducing the number of individuals susceptible to an infectious disease via a vaccination policy that minimizes social and economic costs. The resulting optimal control problem is nonlinear and nonconvex, due to the nonlinear dynamics of the percentage of susceptible, infected and recovered (immunized) populations. Combining refined techniques of viscosity theory and results from the theory of Regular Lagrangian flows, we are able to provide verifiable sufficient conditions under which the minimal cost function identifies with the unique semiconcave viscosity solution to the corresponding HJB equation. A recursion on the latter is then employed in order to provide a numerical implementation in a case study with quadratic costs. Our experiments show that it is possible to maintain asymptotically the number of infected to zero, at least when the disease is not too infective. However, given that the model allows reinfection with positive probability, this comes at the cost of keeping vaccinating at a constant rate in the long-run.

There are several directions towards which this research can be extended and continued. First of all, from a technical point of view, it would be interesting to grasp a deeper understanding of how the theory Regular Lagrangian Flows can be helpful for proving the well-posedness of the closed-loop equation arising in nonlinear optimal control problems, as those arising in mathematical epidemiology. Second of all, from a modeling perspective, stochastic and partial observation features should be included in the model, as the evolution of an infectious disease is clearly far to obey completely observable deterministic law of motions. Thirdly, it would be intriguing to study the problem of optimal vaccination problem from a moral-hazard point of view, thus leading to a principal-agent problem where the social planner designs benefits which should induce the agents (the susceptible population) to get vaccinated. These aspects are clearly outside the scope of the present work and are therefore left to future research.


\appendix

\section{A Technical Results}
\label{sec:app}

\begin{lemma}
\label{lemm:SR-nonneg}
Recall \eqref{eq:S}, \eqref{eq:I} and \eqref{eq:R}. One has $S(t)>0$, $I(t)>0$ and $R(t)>0$ for all $t\geq0$.
\end{lemma}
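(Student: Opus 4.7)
The positivity of $I(\cdot)$ is already granted by the explicit representation
$I(t) = i \exp\{\int_0^t (\beta S(q) - \gamma)\, \d q\}$, which is strictly positive as soon as $i>0$, regardless of the sign of $S(\cdot)$. Hence the plan is to focus on $S$ and $R$, exploiting that both ODEs are linear in their own unknown once the other state and the control are frozen. This suggests using integrating factors to convert \eqref{eq:S} and \eqref{eq:R} into integral equations in which signs can be read off directly.

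More precisely, setting $\phi(t) := \exp\{\int_0^t (\beta I(q) + u(q))\, \d q\}$ and multiplying \eqref{eq:S} by $\phi$, one obtains the variation-of-constants formula
\begin{equation*}
S(t)\,\phi(t) = s + \eta \int_0^t \phi(q)\,R(q)\, \d q,
\end{equation*}
and, analogously, multiplying \eqref{eq:R} by $e^{\eta t}$,
\begin{equation*}
R(t)\, e^{\eta t} = r + \int_0^t e^{\eta q}\bigl(\gamma I(q) + u(q)\,S(q)\bigr)\, \d q.
\end{equation*}
Note that $\phi$ is well defined and strictly positive because $I$ is bounded on bounded intervals and $u$ is bounded by $\overline{U}$.

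The conclusion would then follow by a standard ``first-exit-time'' contradiction. Define
\begin{equation*}
\tau := \inf\{t\ge 0 \,:\, S(t)\le 0 \text{ or } R(t)\le 0\},
\end{equation*}
with the convention $\inf\emptyset = +\infty$. By continuity of $S$ and $R$ and the strict positivity of $s,r$, we have $\tau>0$ and, if $\tau<\infty$, $S(t),R(t)>0$ for $t\in[0,\tau)$ while $\min\{S(\tau),R(\tau)\}=0$. If $S(\tau)=0$, the first integral identity gives $0 = S(\tau)\phi(\tau) \ge s > 0$, a contradiction; if $R(\tau)=0$, the second one gives $0 = R(\tau)e^{\eta\tau}\ge r>0$, again a contradiction. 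Hence $\tau = +\infty$ and both $S$ and $R$ remain strictly positive for all $t\ge 0$.

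The only delicate point is making sure the two integral formulas can be applied up to time $\tau$ simultaneously, since $S$ and $R$ appear coupled in each other's equation. This is however automatic: on $[0,\tau)$ both states stay positive by construction, so $\phi$ and the integrands above are well defined and the identities are obtained by a direct computation of $\frac{\d}{\d t}(S\phi)$ and $\frac{\d}{\d t}(Re^{\eta t})$ using the Carath\'eodory version of \eqref{eq:S} and \eqref{eq:R}; passing to the limit $t\uparrow\tau$ by continuity finishes the argument.
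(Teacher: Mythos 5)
Your argument is correct, and it reaches the conclusion by a genuinely different mechanism than the paper, even though both proofs share the same first-exit-time skeleton. The paper keeps all three components together: it differentiates $\ln\big(S(t)I(t)R(t)\big)$ along the flow, discards the nonnegative terms $\eta R/S$, $\beta S$, $\gamma I/R$, $uS/R$, and bounds the result below by a quantity that stays finite as $t\uparrow\tau$, contradicting the fact that the product (hence its logarithm) must degenerate at $\tau$. You instead decouple the system: you dispose of $I$ via its explicit exponential representation, and then treat \eqref{eq:S} and \eqref{eq:R} as linear equations in their own unknowns, so that the variation-of-constants formulas $S(t)\phi(t)=s+\eta\int_0^t\phi R$ and $R(t)e^{\eta t}=r+\int_0^t e^{\eta q}(\gamma I+uS)$ have sign-definite source terms on $[0,\tau)$ and force $S(\tau)\phi(\tau)\ge s>0$, $R(\tau)e^{\eta\tau}\ge r>0$. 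Your route buys slightly more: it yields explicit quantitative lower bounds ($S(t)\ge s\,\phi(t)^{-1}\ge s\,e^{-(\beta+\overline{U})t}$ and $R(t)\ge r\,e^{-\eta t}$) rather than only strict positivity, and it only needs the exit time of $(S,R)$ rather than of all three components. The paper's logarithmic estimate is more symmetric and treats the three equations in one stroke, at the cost of needing the product to be well defined (all three positive) on $[0,\tau)$. Your closing remark about the coupling is the right point to flag, and your resolution --- that the identities are proved on $[0,\tau)$ where all signs are controlled, then extended to $\tau$ by continuity --- is exactly what is needed; both proofs share the same implicit reliance on existence of the Carath\'eodory solution up to $\tau$.
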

\begin{proof}
Let $(S(0),I(0),R(0))=:(s,i,r) \in (0,1)^3$ and define $\tau:=\inf\{t\geq0:\, S(t) \leq 0\} \wedge \inf\{t\geq0:\, I(t) \leq 0\} \wedge \inf\{t\geq0:\, R(t) \leq 0\}.$
For the sake of contradiction, suppose that $\tau < \infty$. Then, let $t\in[0,\tau)$ and, by using \eqref{eq:S}, \eqref{eq:I} and \eqref{eq:R}, the chain rule yields:
\begin{eqnarray}
& \displaystyle \ln\big(S(t)I(t)R(t)\big) = \ln(s,i,r) + \int_0^t \Big( - \beta I + \eta \frac{R}{S} + \beta S - (\gamma + \eta + u) + \gamma \frac{I}{R} + u \frac{S}{R}\Big)(q) \d q \nonumber \\
& \displaystyle \geq \ln(s,i,r) - \beta \int_0^t I(q) \d q - (\gamma + \eta + \overline{U}) t \geq \ln(s,i,r) - (\gamma + \eta + \overline{U}) t - t \max_{q \in [0,t]}I(q),
\end{eqnarray}
where in the penultimate inequality we have used that $0 \leq u(\cdot) \leq \overline{U}$. The contradiction now follows by taking limits as $t \to \tau$.
\end{proof}

\textbf{Conflict of interest statements.} The authors  certify that they have no affiliations with or involvement in any organization or entity with any financial or non-financial interest in the subject matter or materials discussed in this manuscript. 






\begin{thebibliography}{199}

\bibitem{Acemoglu} \textsc{Acemoglu, D., Chernozhukov, V., Werning, I., Whinston, M.D.}\ (2020). Optimal Targeted Lockdowns in a Multi-Group SIR Model. Working Paper 27102, National Bureau of Economic Research. Available online at \url{http://www.nber.org/papers/w27102}.

\bibitem{Alvarez} \textsc{Alvarez, F.E., Argente, D., Lippi, F.}\ (2020). A Simple Planning Problem for COVID-19 Lockdown, Testing and Tracing. Forthcoming on \emph{Am.\ Econ.\ Rev.: Insights}. Preprint available online at \url{https://www.nber.org/papers/w26981}

\bibitem{Ambrosio1} \textsc{Ambrosio, L.}\ (2017). \emph{Well Posedness of ODE's and Continuity Equations with Nonsmooth Vector Fields, and Applications}. Lecture notes available at \url{https://cvgmt.sns.it/media/doc/paper/3395/Santalo.pdf}

\bibitem{Ambrosio2} \textsc{Ambrosio, L.}\ (2004). Transport Equation and Cauchy Problem for BV Vector Fields. \emph{Inventiones Mathematicae} 158, pp.\ 227--260.

\bibitem{Barrett-Hoel} \textsc{Barrett, S., Hoel, M.}\ (2007). Optimal Disease Eradication. \emph{Environment and Development Economics} 12(5), pp.\ 627--652.

\bibitem{Brito-etal} \textsc{Brito, D.L., Sheshinski, E., Intriligator, M.D.}\ (1991). Externalities and compulsory vaccinations. \emph{Journal of Public Economics} 45, pp. 69--90.

\bibitem{Calvia-etal} \textsc{Calvia, A., Gozzi, F., Lippi, F., Zanco, G.}\ (2022). A Simple Planning Problem for COVID-19 Lockdown: A Dynamic Programming Approach. Preprint available online at \url{https://arxiv.org/pdf/2206.00613.pdf}

\bibitem{CS} \textsc{Cannarsa, P., Sinestrari, C.}\ (2014). \emph{Semiconcave Functions, Hamilton-Jacobi Equations, and Optimal Control}. Progress in Nonlinear Differential Equations and their Applications, Volume 58. Birkh\"auser.

\bibitem{EvansGariepy} \textsc{Evans, L.C., Gariepy, R.F.}\ (1992). \emph{Measure Theory and Fine Properties of Functions}. CRC Press.

\bibitem{FedericoFerrari} \textsc{Federico, S., Ferrari, G.}\ (2021). Taming the Spread of an Epidemic by Lockdown Policies. 
\emph{Journal of Mathematical Economics} 93.

\bibitem{Geoffard-Philipson} \textsc{Geoffard, P.Y., Philipson, T.} (1997). Disease Eradication: Private versus Public Vaccination. \emph{The American Economic Review} 87(1), pp.\ 222--230.

\bibitem{Glover-etal} \textsc{Glover, A., Heathcote, J., Krueger, D.} (2022). Optimal Age-Based Vaccination and Economic Mitigation Policies for the Second Phase of the Covid-19 Pandemic. Staff Report 636, Federal Reserve Bank of Minneapolis. Available online at \url{https://ideas.repec.org/p/fip/fedmsr/93646.html}

\bibitem{Josephy} \textsc{Josephy, M.}\ (1981). Composing Functions of Bounded Variation. \emph{Proceedings of the American Mathematical Society} 83(2), pp.\ 354--356.

\bibitem{Hethcote-Waltman} \textsc{Hethcote, H.W., Waltman, P.}\ (1973). Optimal Vaccination Schedules in a Deterministic Epidemic Model. \emph{Mathematical Biosciences} 18, pp.\ 365--381.

\bibitem{KermackMcKendrick} \textsc{Kermack, W.O., McKendrick, A.G.}\ (1927). A Contribution to the Mathematical Theory of Epidemics, Part I. 
\emph{Proceedings of the Royal Society A} 115, pp.\ 700--721.

\bibitem{KruseStrack} \textsc{Kruse, T., Strack, P.}\ (2020). Optimal Control of an Epidemic through Social Distancing. Preprint 
available online at \url{https://ssrn.com/abstract=3581295 or http://dx.doi.org/10.2139/ssrn.3581295}.


\bibitem{Micloetal} \textsc{Miclo, L., Spiroz, D., Weibull, J.}\ (2020). Optimal Epidemic Suppression under an ICU Constraint. Preprint 
available online at \url{https://arxiv.org/abs/2005.01327}

\bibitem{ORegan-etal} \textsc{O'Regan, S.M., Kelly, T.C., Korobeinikov, A., O'Callaghan, M.J.A., Pokrovskii, A.V.}\ (2010). Lyapunov functions for SIR and SIRS epidemic models, \emph{Applied Mathematical Letters} 23(4), pp.\ 446--448.

\bibitem{RaoBrandeau} \textsc{Rao, I.J., Brandeau, M.L.}\ (2021). Optimal Allocation of Limited Vaccine to Control an Infectious Disease: Simple
Analytical Conditions. \emph{Mathematical Biosciences} 337.

\bibitem{YongZhou1999} \textsc{Yong, J., Zhou, X.Y.}\ (1999). \emph{Stochastic Control - Hamiltonian Systems and HJB Equations}. Springer.

\end{thebibliography}
\end{document}